\providecommand{\U}[1]{\protect\rule{.1in}{.1in}}
\newcommand*{\dprime}{^{\prime\prime}\mkern-1.0mu}
\newcommand{\WA}{\textbf{(WA)}}
\newcommand{\SA}{\textbf{(SA)} }
\newcommand{\APSA}{\textbf{(APSA)} }
\newcommand{\BA}{{\mathbb {A}}}
\newcommand{\BF}{{\mathbb {F}}}
\newcommand{\BG}{{\mathbb {G}}}
\newcommand{\BN}{{\mathbb {N}}}
\newcommand{\BQ}{{\mathbb {Q}}}
\newcommand{\BR}{{\mathbb {R}}}
\newcommand{\BZ}{{\mathbb {Z}}}
\newcommand{\CA}{{\mathcal {A}}}
\newcommand{\CC}{{\mathcal {C}}}
\renewcommand{\CD}{{\mathcal {D}}}
\newcommand{\CE}{{\mathcal {E}}}
\newcommand{\CG}{{\mathcal {G}}}
\newcommand{\CO}{{\mathcal {O}}}
\newcommand{\CP}{{\mathcal {P}}}
\newcommand{\CR}{{\mathcal {R}}}
\newcommand{\CT}{{\mathcal {T}}}
\newcommand{\CU}{{\mathcal {U}}}
\newcommand{\CV}{{\mathcal {V}}}
\newcommand{\CY}{{\mathcal {Y}}}
\newcommand{\RA}{{\mathbf {A}}}
\newcommand{\codim}{{\mathrm{codim}}}
\newcommand{\GL}{{\mathrm{GL}}}
\newcommand{\ord}{{\mathrm{ord}}}
\renewcommand{\mod}{\ \mathrm{mod}\ }
\newcommand{\Res}{{\mathrm{Res}}}
\newcommand{\SL}{{\mathrm{SL}}}
\newcommand{\Spec}{{\mathrm{Spec}}}
\newcommand{\vol}{{\mathrm{vol}}}
\newcommand{\sbt}{\subset}
\newcommand{\bk}{\bar{k}}
\newcommand{\prs}{\operatorname{pr}^S}
\numberwithin{equation}{section}
\theoremstyle{remark}
\newtheorem{defi}{\rm{\textbf{Definition}}}[section]
\newtheorem{rem}[defi]{\rm{\textbf{Remark}}}
\newtheorem{rems}[defi]{\rm{\textbf{Remarks}}}
\newtheorem*{rem*}{\rm{\textbf{Remark}}}
\newtheorem*{rems*}{\rm{\textbf{Remarks}}}
\newtheorem{ques}[defi]{\rm{\textbf{Question}}}
\theoremstyle{plain}
\newtheorem{thm}[defi]{\rm{\textbf{Theorem}}}
\newtheorem{cor}[defi]{\rm{\textbf{\textbf{Corollary}}}}
\newtheorem{lem}[defi]{\rm{\textbf{Lemma}}}
\begin{document}

\title[Arithmetic purity]
 {Arithmetic purity of strong approximation for semisimple simply connected groups}

\author{Yang Cao}
\author{Zhizhong Huang}

\address{Yang CAO
\newline Leibniz Universit\"at Hannover
\newline Welfengarten 1, 30167 Hannover, Germany}
\email{yangcao1988@gmail.com; yang.cao@math.uni-hannover.de}

\address{Zhizhong Huang
\newline Leibniz Universit\"at Hannover
\newline Welfengarten 1, 30167 Hannover, Germany}
\email{zhizhong.huang@yahoo.com}

\keywords{Strong approximation, linear algebraic groups, affine linear sieves}
\thanks{\textit{MSC2020}: 14G12 11G35 20G35 11N36}
\date{\today.}



\begin{abstract}
In this article we establish the arithmetic purity of strong approximation for certain semisimple simply connected linear algebraic groups and their homogeneous spaces over a number field $k$. For instance, for any such group $G$ and for any open subset $U$ of $G$ with $\codim(G\setminus U, G)\geqslant 2$, we prove that 
\begin{enumerate}
	\item  if $G$ is $k$-simple and $k$-isotropic, then $U$ satisfies strong approximation off any finite number of places;
	\item  if $G$ is the spin group of a non-degenerate quadratic form which is not compact over archimedean places, then $U$ satisfies strong approximation off all archimedean places.
\end{enumerate} 
 As a consequence, we prove that the same property holds for affine quadratic hypersurfaces. 
 Our approach combines a fibration method with subgroup actions developed for induction on the codimension of $G\setminus U$, and an affine linear sieve which allows to produce integral points with almost-prime polynomial values.

\end{abstract}

\maketitle
	\setcounter{tocdepth}{1} 
\tableofcontents


\section{Introduction}
\subsection{Background}
Algebraic varieties over global fields whose set of rational points is dense in the adelic space with respect to the adelic topology are said to verify \emph{strong approximation} (cf. \cite[Def. 2.9]{Wit16}). 
 As from the definition of adelic spaces (cf. e.g. \cite{Conrad}), the study of strong approximation naturally involves integral points. They are generally believed to be more intricate to tackle than rational points, regarding our present knowledge, and it seems that more evidence is needed to formulate satisfactory conjectures.
  For affine varieties, the adelic topology is in general finer than the product topology of local points. The density of rational points in the latter is referred to \emph{weak approximation}, and these two topologies agree if the ambient variety is proper.
If the topology is ``too'' fine, then it can happen that strong approximation fails even for the simplest examples like $\BA^1$, since number fields are diagonally embedded  in their adeles as closed discrete lattices. One needs to modify the topology via ``forgetting'' a finite number of places, say $S$, i.e., to consider the topology induced by adelic points without $S$ components, in which we refer to the property \emph{strong approximation off} $S$ (cf. Definition~\ref{def:sa} (1)).

Algebraic groups were the first class of varieties for which strong approximation was systematically investigated. It is Eichler \cite{Eic38} who pioneered this study for some classical groups. Since the 60s, several authors established that semisimple simply connected algebraic groups verify strong approximation. The number field case is mainly due to Kneser \cite{Kne65} and Platonov \cite{Plat2} (cf. \cite[Thm.~7.12]{PR}). \begin{thm}[Kneser-Platonov]\label{rmk:KneserPlatonov}
	A non-trivial semisimple simply connected group $G$ over a number field $k$ satisfies strong approximation off $S$, a finite subset of places of $k$, if and only if $G^\prime(k_S):=\prod_{v\in S} G^\prime(k_v)$ is not compact for every $k$-simple factor $G^\prime$ of $G$.
\end{thm}
The above non-compactness condition on $G^\prime$ is equivalent to the existence of a place $v\in S$ such that $G^\prime(k_v)$ is not compact. We refer to the book \cite[Chap.~7]{PR} for further details and references.

It is a natural problem to study how these density properties vary under birational modifications. First of all, by an ultrametric version of the implicit function theorem (cf. \cite[Lem.~3.5.5]{Serre} and \cite[\S3.1]{PR}), the weak approximation property is a birational invariant amongst smooth varieties. 
However, for strong approximation off any finite number of places, this is not true any more even for $\BA^1$, as it fails already for $\BG_{\operatorname{m}}$. Moreover, being geometrically simply connected is a necessary condition in order for varieties over number fields to verify strong approximation, as was first pointed out by Min\v{c}hev \cite[Thm. 1]{Minchev} (see also \cite[\S8.4.6]{Poonen}). 
On the other hand, consider the variety $\BA^1\times\BG_{\operatorname{m}}$ over $\BQ$. This variety does not satisfy strong approximation off $\BR$, but this failure is explained by the Brauer-Manin obstruction, thanks to the Poitou-Tate duality (cf. \cite[\S1.2]{Harari}). However, Xu and the first author proved that the defect of strong approximation off $\BR$ for the variety $\BA^1\times\BG_{\operatorname{m}}\setminus \{(0,1)\}$ cannot be explained by any étale Brauer-Manin obstruction (cf. \cite[Ex.~5.2]{CX}). See \cite[\S2.7]{Wit16} for more examples. All these phenomena suggest that the behaviour of integral points can be sensitive to removing closed subsets, while Brauer groups are insensitive to removing subschemes of codimension two (the purity of Brauer groups, cf. \cite[\S6]{Grothendieck}).

In a reverse direction, if a smooth variety verifies strong approximation (when there is no Brauer-Manin obstruction), one expects that any other smooth variety differing by a closed subvariety of codimension at least 2 inherits this property.
The following question has been raised first by Wittenberg at the AIM workshop ``Rational and integral points on higher-dimensional varieties'' in 2014 (cf. \cite[Problem 6]{AIM14}) and also by Colliot-Thélène and Xu, respectively, on different occasions. See also \cite[\S2.7 Question 2.11]{Wit16} and \cite[Question 1.2]{CLX}.
\begin{ques}\label{q:purity}
	Let $X$ be a smooth variety over a number field which satisfies strong approximation off a finite set of places $S$.  Does any Zariski open subset $U$ of $X$ also satisfy this property, whenever $\codim(X\setminus U,X)\geqslant 2$?
\end{ques}
Following \cite[p.~336]{CLX}, we shall say that such $X$ verifies the \emph{arithmetic purity of strong approximation off $S$} (Def. \ref{def:sa} (2)).
One piece of evidence towards a positive answer to Question \ref{q:purity} is supported by the purity of branch locus due to Zariski--Nagata--Grothendieck (cf. \cite[Thm.~5.2.13, Cor.~5.2.14]{Sza}), which states that the algebraic fundamental group is insensitive to removing codimension two subschemes. So a positive answer to Question \ref{q:purity} is compatible with Min\v{c}hev's observation.

Let us summarize the state of the art in the literature. Wei \cite[Lem.~1.1]{Wei14} and, independently, Xu and the first author \cite[Prop. 3.6]{CX} proved that Question \ref{q:purity} has a positive answer for the affine space $\BA^n$.  This result was applied to show  \cite[Thm.~1.2]{CX} that strong approximation with Brauer\textendash Manin obstruction holds for toric varieties, 
and afterwards, it was generalised by Liang, Xu and the first author \cite[Thm.~1.1]{CLX}, who answered Question \ref{q:purity} in the affirmative for semisimple simply connected linear algebraic groups that are \emph{quasi-split}. They also made an important reduction \cite[Thm.~1.3]{CLX} -- a positive answer to Question \ref{q:purity} for general semisimple simply connected linear algebraic groups implies the arithmetic purity of strong approximation with Brauer-Manin obstruction for their homogeneous spaces with connected stabilizers and no non-constant invertible regular functions.
  
  Motivated by this progress, the objective of this paper is to give further positive evidence towards Question \ref{q:purity} as a continuation of the above investigations.
More precisely, we give a positive answer to Question \ref{q:purity} for semisimple simply connected almost simple linear algebraic groups  which are \emph{isotropic}, and for 
spin groups of any non-degenerate quadratic forms with more than three variables, when $S$ is the set of all archimedean places.

\subsection{Notation and terminology}
In this article, the letter $k$ denotes a number field unless otherwise specified. We fix an algebraic closure $\bar{k}$ of $k$. We denote by $\Omega_{k}$ the set of non-trivial places of $k$ and denote by $\infty_{k}$ the set of archimedean places. 
For each place $v\in \Omega_{k}$,  $k_{v}$ denotes the completion of $k$.
We denote by $\CO_k$ (resp. $\CO_{v}$) the ring of integers of $k$ (resp. $k_v$ for $v\in\Omega_{k}\setminus \infty_{k}$). 
The ring of adeles of $k$ is denoted by $\RA_{k}$.
For any finite subset $S$ of $\Omega_k$, the ring of $S\cup \infty_{k}$-integers of $k$ is denoted by $\CO_{k,S}$. 
The ring of adeles without $S$-components is denoted by $\RA_{k}^{S}$.

 A \emph{variety} over $k$ is a separated scheme of finite type over $k$.
 For $X$ a $k$-variety, the set of $k$-rational points $X(k)$ embeds diagonally into $X(\RA_{k})$, and we denote by $\prs:X(\RA_{k})\to X(\RA_{k}^{S})$ the natural projection between sets of adelic points of $X$. We define $X(k_S):=\prod_{v\in S}X(k_v)$. 
When $S=\infty_{k}$ we write $k_S=k_\infty$ for simplicity.
For each $x\in X$ (resp. $v\in\Omega_{k}$), $k(x)$ (resp. $k(v)$) denotes its residue field.
For any field extension $K/k$, we write $X_K:=X\times_k K$. 
\begin{defi} \label{def:sa} 
Let $X$ be a variety over a number field $k$ and $S$ be a finite subset of $\Omega_k$.
\begin{enumerate}
	\item We say that $X$ satisfies \emph{strong approximation off $S$}, abbreviated as \SA off $S$, if the diagonal embedding of $X(k)$ is dense in $\prs (X(\RA_{k}))$;
	\item 
	Let $c\in\BN_{\geqslant 1}$.
	We say that $X$ satisfies \emph{the arithmetic purity of strong approximation off $S$ of codimension $c$} if any open subvariety of $X$ whose complement is of codimension $\geqslant c$ satisfies strong approximation off $S$. In particular, when $c=2$, we simply say that $X$ satisfies \emph{the arithmetic purity of strong approximation off $S$}, abbreviated as \APSA off S.
\end{enumerate}
\end{defi}
Similarly we shall denote ``weak approximation'' by \textbf{(WA)}.

Our terminology follows standard textbooks on linear algebraic groups (e.g. \cite{Bo}, \cite{CGP} and \cite{Hum}).
Let $G$ be a connected linear algebraic group over $k$.  
We say that $G$ is $k$-\emph{isotropic} if $G$ has $k$-rank greater than one, that is, if $G$ contains a non-trivial $k$-split torus. Otherwise, we say that $G$ is $k$-\emph{anisotropic} if it does not contain any $k$-split torus (cf. \cite[\S20.1]{Bo}, \cite[p.~635]{CGP} and \cite[\S34.4]{Hum}). We say that $G$ is $k$-\emph{quasi-split} if it contains a Borel subgroup  over $k$ (cf. \cite[\S35.1]{Hum}). We call $G$ \emph{(almost) $k$-simple} if it does not contain any non-commutative non-trivial proper connected normal subgroups defined over $k$ (cf. \cite[p.~168]{Hum}).

Let $q$ be a non-degenerate quadratic form over $k$ in $n$ variables. The \emph{spin group} $\operatorname{Spin}(q)$ is the universal (double) covering of $\operatorname{SO}(q)$, the special orthogonal group associated to $q$. If $n\geqslant 3$, the group $\operatorname{Spin}(q)$ is connected, semisimple and simply connected. See \cite[\S5.1]{CTX09}.

\subsection{Main results} 

We now state several main theorems of this paper regarding Question \ref{q:purity}.
Our first result resolves all $k$-simple and isotropic cases off an arbitrary place.
\begin{thm}\label{thm:isotropic}
	Let $k$ be a number field and $v_0\in \Omega_k$.
	Let $G$ be a semisimple simply connected $k$-simple linear algebraic group.
	If $G$ is $k$-isotropic, then $G$ satisfies \APSA off $v_0$.
\end{thm}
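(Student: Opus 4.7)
The plan is to reduce the assertion to the \textbf{(APSA)} theorem for quasi-split groups (\cite[Thm.\ 1.1]{CLX}), via a fibration argument driven by the action of a $k$-split semisimple subgroup. Two inputs are available from the outset: by Kneser--Platonov, $G$ itself satisfies \SA{} off $v_0$ since $k$-isotropicity implies $G(k_{v_0})$ is non-compact; and since $G$ is $k$-simple and $k$-isotropic, a $k$-rational root $\alpha$ of the maximal $k$-split torus exists, with opposite root subgroups $U_{\pm\alpha}$ generating a simply connected $k$-split subgroup $L\subset G$ isogenous to $\SL_2$, which is quasi-split and hence satisfies \textbf{(APSA)} off $v_0$ by \cite[Thm.\ 1.1]{CLX}.

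I would proceed by descending induction on $c := \codim(G\setminus U, G)\ge 2$; the base case $c > \dim G$ (so $U = G$) is \SA{} for $G$ off $v_0$. Set $Z := G\setminus U$, and consider the principal $L$-bundle $\pi:G\to Y := L\backslash G$ (for left translation by $L$), which is Zariski-locally trivial since $L$ is special in the sense of Serre. The key geometric observation is that for a generic $k$-point $g_0 \in G$, the orbit $Lg_0\cong L$ meets $Z$ in codimension $\ge 2$, by the dimension count
\[
\dim(Lg_0\cap Z)\le \dim L + \dim Z - \dim G = \dim L - c \le \dim L - 2.
\]
Consequently, $Lg_0 \cap U$ has codim-$\ge 2$ complement in $Lg_0 \cong L$, and by the quasi-split case satisfies \textbf{(APSA)} off $v_0$.

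Given $(u_v)_{v\neq v_0}\in U(\RA_k^{v_0})$, the fibration argument proceeds as follows. Project to $(y_v)\in Y(\RA_k^{v_0})$, and restrict attention to a Zariski-open $Y^{\circ}\subset Y$ over which $\pi$ admits a section $s:Y^{\circ}\to G$. The pullback $Z_s := s^{-1}(Z)$ has codim $\ge c-1$ in $Y$. Using \SA{} for $G$ off $v_0$ pushed through $\pi$ (which is $k_v$-surjective by Kneser's vanishing $H^1(k_v, L) = 0$ at non-archimedean $v$), together with the inductive hypothesis applied to the open $\pi^{-1}(Y^{\circ}\setminus Z_s)\subset G$, I approximate $(y_v)$ by $y\in (Y^{\circ}\setminus Z_s)(k)$. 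Then within the fiber $\pi^{-1}(y)\cong L$, the open subset $\pi^{-1}(y)\cap U$ has codim-$\ge 2$ complement, and \textbf{(APSA)} off $v_0$ for $L$ yields $l\in L(k)$ with $u := l\cdot s(y) \in U(k)$ approximating $(u_v)$ in the adelic topology of $U$.

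The main obstacle is the fibration bookkeeping: the base $Y = L\backslash G$ is a homogeneous space, not a group, so \SA{} off $v_0$ for its open subsets does not follow formally from \SA{} for $G$, but must be extracted by pulling the adelic conditions back to open subvarieties of $G$ (where \SA{} holds either by Kneser--Platonov or by the inductive hypothesis when the codimension propagates correctly). A secondary subtlety is maintaining integrality at almost all finite places: one needs $u \in \CU(\CO_v)$ for almost all $v$, i.e.\ $\bar u\notin \bar Z_v(k(v))$. Since $Z$ has good reduction of codim $\ge 2$ at almost all $v$, the bad residue locus $\bar Z_v(k(v))$ occupies $\ll |k(v)|^{\dim G - 2}$ of the $\sim |k(v)|^{\dim G}$ residues in $G(k(v))$, so coherent choices of $s$, $y$, and $l$ modulo $v$ can avoid it at all but finitely many places---thereby bypassing the affine sieve that the anisotropic (spin) case of the paper requires.
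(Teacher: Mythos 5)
Your overall architecture (fibre by a subgroup with \textbf{(APSA)}, induct on the codimension, finish inside a fibre) is the same as the paper's Theorem \ref{thm:reduction}, but the codimension bookkeeping that is supposed to drive the induction does not close, and the missing ingredient is exactly the paper's conjugation lemma (Lemma \ref{isolem2}). Your inductive hypothesis is \textbf{(APSA)} of codimension $c+1$, yet the open set you feed it, $\pi^{-1}(Y^{\circ}\setminus Z_s)$, has complement $\pi^{-1}\bigl((Y\setminus Y^{\circ})\cup Z_s\bigr)$, whose codimension in $G$ equals that of $(Y\setminus Y^{\circ})\cup Z_s$ in $Y$: the non-trivialization locus $Y\setminus Y^{\circ}$ is typically a divisor, and for $Z_s=s^{-1}(Z)$ there is no genericity to invoke (the section is whatever the local trivialization gives; when $c\leqslant\dim L$ it could even land inside $Z$), so the claimed bound $\codim(Z_s,Y)\geqslant c-1$ is unjustified -- and even granted, $c-1<c+1$, so neither Kneser--Platonov nor the inductive hypothesis applies to that open set. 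Likewise, your generic-translate dimension count only controls fibres over a dense open of $Y$ and says nothing about the genuinely dangerous configuration in which $Z$ contains whole cosets $Lg$ (possible as soon as $\dim Z\geqslant\dim L$). The paper removes this configuration by replacing the subgroup with a generic $k$-conjugate so that $D'=\{d\in D: H\cdot d\subset D\}$ is $H$-invariant with $\dim D'<\dim D$ and $\pi(U)=Y\setminus\pi(D')$; then $U_1=G\setminus D'$ has complement of codimension $\geqslant c+1$, the inductive hypothesis applies to $U_1$, and a rational point of $U_1$ found inside $\pi_{U_1}^{-1}(\pi_U(W))$ produces a base point $y$ whose fibre is simultaneously a trivial torsor, meets $D$ in codimension $\geqslant 2$ (after a Hilbert-set refinement), and contains an adelic point of $W$. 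Without this step your plan to ``pull the adelic conditions back to open subvarieties of $G$'' has nothing with large enough codimension to pull back to.

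Two further points. First, integrality: the open set is $W=\prod_{v\in S}W_v\times\prod_{v\notin S\cup\infty_k}\CU(\CO_v)$, so the rational point must lie in $\CU(\CO_v)$ at \emph{every} $v\notin S$, not at all but finitely many; your closing Lang--Weil count only avoids the bad residue locus away from a finite, $y$-dependent set of places, and the proposed ``coherent choices of $s$, $y$, $l$ modulo $v$'' at the leftover places is precisely the difficulty that forces the sieve in the anisotropic case. The paper's isotropic argument sidesteps it structurally: the final \SA{} step inside the fibre approximates an adelic point that already lies in $W$, so all integrality conditions are inherited rather than re-verified. Second, your input subgroup is flawed as stated: for a relative root $\alpha$, the group generated by $U_{\pm\alpha}$ is a $k$-rank-one semisimple group which in general is neither $k$-split nor isogenous to $\SL_2$ (for $\operatorname{Spin}$ of an isotropic form of Witt index $1$ it is all of $G$), so the quasi-split case of \cite{CLX} does not apply to it; the paper instead takes $H={\mathbb{G}}_a$ via Borel--Tits, for which \textbf{(APSA)} off $v_0$ is immediate, and the reduction theorem needs nothing more.
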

\begin{rems}\label{rmk:BorelTits}
		\hfill
		\begin{enumerate}

		\item For any semisimple linear algebraic $k$-group $G$, according to the Borel-Tits theory (cf. \cite[Rem.~C.3.10]{CGP}), we know that $G$ is $k$-isotropic if and only if $G$ contains a closed $k$-subgroup isomorphic to $\BG_{\operatorname{a}}$, and equivalently, $G$ contains a parabolic subgroup over $k$ different from $G$. Thus being $k$-quasi-split implies being $k$-isotropic.
		
		\item In general, the notion of being isotropic is much weaker than being quasi-split. For example, there are many $k$-isotropic spin groups of quadratic forms in $\geqslant 4$ variables which are not $k$-quasi-split (cf. \cite[\S 23.4]{Bo}).\footnote{A concrete example is the family of spin groups of non-degenerate quadratic forms in $11$ variables, in which there is only one quasi-split $\BQ$-form $\operatorname{Spin}(6,5)$. We thank one anonymous referee for kindly pointing out this to us.} Therefore Theorem \ref{thm:isotropic} generalises \cite[Thm.~1.1]{CLX} and covers many new cases.
		
		\item When $k$ is a totally imaginary field, Theorem \ref{thm:isotropic} covers all semisimple simple connected $k$-simple groups which are not of $A_n$-type. Indeed $k$-groups of $A_n$-type are the only $k$-anisotropic cases (cf. \cite[Thm.~6.25, pp.~352]{PR}).
		\end{enumerate}
\end{rems}
At the AIM workshop, Wittenberg \cite[Question 6]{AIM14} proposed affine quadratic hypersurfaces as first interesting cases worthy of study. This is answered affirmatively by our next result.

\begin{thm}\label{cor:anisotropic}
	Let $k$ be a number field.
	Let $q(x_1,\cdots,x_n)$ be a non-degenerate quadratic form over $k$ with $n\geqslant 3$ variables and $G:=\operatorname{Spin}(q)$ be the corresponding spin group.
	If $q$ is isotropic over an archimedean place, 
	then 
	
	(i) $G$ satisfies \APSA off $\infty_{k}$;
	
	(ii) for any $a_0\in k^{\times}$, the affine quadric defined by $q(x_1,\cdots,x_n)=a_0$ in $\BA^{n}$ 
	satisfies \APSA off $\infty_{k}$ if $n\geqslant 4$.
\end{thm}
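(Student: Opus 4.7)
The plan is to establish Part (i) via the fibration-and-sieve machinery that is the technical heart of the paper, then to deduce Part (ii) from Part (i) by combining the Cao--Liang--Xu reduction for homogeneous spaces with classical strong approximation for affine quadrics.

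For Part (i), write $G = \operatorname{Spin}(q)$, fix an open $U \subset G$ with $F := G \setminus U$ of codimension $\geqslant 2$, and induct on $\codim(F,G)$, the base case being codimension $2$. By the Kneser--Platonov theorem, $G$ already satisfies \SA\ off $\infty_k$, since $G(k_\infty)$ is non-compact under the archimedean-isotropy hypothesis on $q$. To promote this to \textbf{(APSA)}, the plan is to pick a closed $k$-subgroup $H \hookrightarrow G$ of the form $\operatorname{Spin}(q'')$ attached to a smaller sub-form still isotropic at the favourable archimedean place (e.g.\ one produced by splitting off a hyperbolic plane there) and to fiber $G$ by left cosets of $H$. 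For a dense open set of cosets $gH$ the intersection $F \cap gH$ has codimension $\geqslant 2$ in $gH \simeq H$, so the induction hypothesis on the smaller spin group $H$ gives integral points of $U \cap gH$ inside any prescribed adelic neighbourhood. Gluing these fiberwise solutions into a single global integral point of $U$ is the analytic input: an affine combinatorial sieve of Bourgain--Gamburd--Sarnak type produces integral points in the base whose lifts avoid the bad locus $F$ off a sparse set of auxiliary primes, and the flexibility of the adelic neighbourhood absorbs those primes.

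For Part (ii), set $X := \{q(x_1,\dots,x_n) = a_0\} \subset \BA^n$. If $X(\RA_k) = \emptyset$ there is nothing to prove; otherwise the Hasse--Minkowski theorem supplies $x_0 \in X(k)$. By Witt's extension theorem, $G = \operatorname{Spin}(q)$ acts transitively on $X_{\bar k}$ via its projection to $\operatorname{SO}(q)$, giving $X \cong G/H$ where $H = \operatorname{Stab}_G(x_0) = \operatorname{Spin}(q|_{x_0^{\perp}})$; since $q|_{x_0^{\perp}}$ is non-degenerate in $n - 1 \geqslant 3$ variables, $H$ is connected. By Part (i) applied to $G$ one has \textbf{(APSA)} off $\infty_k$ for $G$; invoking the reduction \cite[Thm. 1.3]{CLX}, it follows that for every open $U \subset X$ with $\codim(X\setminus U,X)\geqslant 2$, the set $U(k)$ is dense in the Brauer--Manin set $\prs(U(\RA_k))^{\operatorname{Br}(U)}$.

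It then suffices to check that this Brauer--Manin obstruction is empty, i.e.\ that $X$ itself satisfies \SA\ off $\infty_k$. This is the classical strong approximation for affine quadrics (Eichler--Kneser): for $n \geqslant 4$, $q$ isotropic at some archimedean place, and $X(k) \neq \emptyset$, the quadric $X$ satisfies \SA\ off $\infty_k$; equivalently, this follows from Kneser--Platonov \SA\ for $H$ via the fibration $G \to X$. By the purity of the Brauer group \cite[\S6]{Grothendieck}, the restriction map $\operatorname{Br}(X) \to \operatorname{Br}(U)$ is an isomorphism whenever $\codim(X\setminus U,X) \geqslant 2$, so the Brauer--Manin set of $U$ coincides with the full adelic set $\prs(U(\RA_k))$, and density of $U(k)$ in it is exactly the conclusion sought. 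The main obstacle is the sieve step in Part (i): the archimedean-isotropy hypothesis is strictly weaker than $k$-isotropy, so the elementary fibration argument underlying Theorem \ref{thm:isotropic} is unavailable, and a genuine affine sieve adapted to possibly $k$-anisotropic spin groups is required to close the induction.
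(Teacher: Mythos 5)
There is a genuine gap in your treatment of part (i). The paper's actual proof of (i) is short: after diagonalising $q$ over $k$ one extracts a ternary subform $a_1x_1^2+a_2x_2^2+a_3x_3^2$ whose coefficients are chosen so that $G'=\operatorname{Spin}(a_1x_1^2+a_2x_2^2+a_3x_3^2)$ is non-compact at the archimedean place where $q$ is isotropic, and then one invokes Theorem \ref{thm:anisotropic} (itself resting on Theorem \ref{thm:reduction} and on the $3$-dimensional case, Theorem \ref{th:mani1}). You instead sketch a re-derivation of this machinery, and the sketch does not close. First, your induction is ambiguous: you announce induction on $\codim(F,G)$ but then appeal to ``the induction hypothesis on the smaller spin group $H$''; the fibration step (Theorem \ref{thm:reduction}) in fact needs $H$ itself to satisfy \textbf{(APSA)} off $\infty_k$, and descending through subforms bottoms out at a ternary spin group which may be $k$-anisotropic, where no further fibration is available. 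That terminal case is precisely the hard content of the paper (Theorem \ref{th:mani1}), and you leave it as an acknowledged black box. Second, the gluing mechanism you describe for the sieve step is not correct as stated: the adelic neighbourhood is prescribed in advance, so the finitely many large ``bad'' primes produced by the almost-prime condition cannot be ``absorbed'' by flexibility of the neighbourhood. In the paper they are eliminated by a second torus action: one takes an integral point $Q$ of infinite order in a maximal torus $T$ with $T(k_\infty)$ non-compact, uses that the sieved function $f$ is constant along the $T$-orbit inside a fibre of $G\to T\backslash G$, and compares $\operatorname{ord}(Q \bmod v)$ with the uniform bound $N$ on $\deg(\CD_y)$ to find, by pigeonhole, a translate avoiding $D$ modulo every bad place. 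Without this (or an equivalent device), your proof of (i) is incomplete; with a citation of Theorem \ref{thm:anisotropic} it would reduce to the paper's two-line argument, but you would still need to make the choice of the ternary $k$-subform precise (a subform defined over $k$ with prescribed signs at the real place $v_0$, not a hyperbolic plane split off ``at $v_0$'', which is only a local operation).

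Part (ii) follows the paper's route (homogeneous-space structure $X\cong G/\operatorname{Spin}(q|_{x_0^\perp})$ with connected stabiliser, plus the reduction of \cite[Thm. 1.3]{CLX}), but the step where you dispose of the Brauer--Manin condition is imprecise. Your assertion that ``$X$ satisfies \textbf{(SA)} off $\infty_k$, hence the Brauer--Manin set of $U$ coincides with the full adelic set'' is not a valid implication as stated: strong approximation for $X$ does not by itself show that classes of $\Br(U)$ cut out nothing from $U(\RA_k)$. What is actually used is that for $n\geqslant 4$ one has $\Br(X)=\Br_0(X)$ (cf. \cite[\S1.2]{Bor}), so that by purity $\Br(U)=\Br_0(U)$ and the obstruction is vacuous; alternatively one can repair your argument by approximating the finite components by a rational point and adjusting only the archimedean components, but this needs to be spelled out. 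This second issue is minor and fixable; the missing proof of the anisotropic $3$-dimensional case in part (i) is the substantive gap.
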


The following result is the technical core for the proof of Theorem \ref{cor:anisotropic}. It applies in particular to spin groups.

\begin{thm}\label{thm:anisotropic}
	Let $k$ be a number field.
	Let $G$ be a semisimple simply connected $k$-simple linear algebraic group.
	If $G$ contains a closed subgroup $G'\sbt G$ with $G'$ a three-dimensional semisimple simply connected linear algebraic group over $k$ such that $G'(k_{\infty})$ is not compact, 
	then $G$ satisfies \APSA off $\infty_{k}$.
\end{thm}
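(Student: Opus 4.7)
The plan is to approximate an adelic point of $U$ off $\infty_k$ first by a $k$-rational point of $G$ (which may unfortunately lie in $Z:=G\setminus U$), and then to correct it by right-multiplying by a well-chosen element of $G'(k)$ close to the identity at the approximating places. This reduces the theorem to \textbf{(APSA)} off $\infty_k$ for the $3$-dimensional group $G'$ itself, which will be handled by an affine combinatorial sieve. Note that since $G'(k_{\infty})$ is non-compact, so is $G(k_{\infty})$; hence $G$ satisfies \SA\ off $\infty_k$ by Kneser--Platonov, which will be the starting point for the first approximation.

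First, I would fix an adelic point $(u_v)_{v\notin\infty_k}\in U(\RA_k^{\infty_k})$ and a finite set $T$ of finite places at which approximation is demanded, and use \SA\ for $G$ to obtain $g_0\in G(k)$ close to $(u_v)$ on $T$. Producing the desired rational point then amounts to finding $g'\in G'(k)$ close to $e$ on $T$ with $g'\notin Z_{g_0}:=g_0^{-1}Z\cap G'\subseteq G'$. Via the $G'$-torsor $\pi\colon G\to Y:=G/G'$, the hypothesis $\codim(Z,G)\geq 2$ implies $\codim(Z_{g_0},G')\geq 2$ for $\pi(g_0)$ in a dense open $V\subseteq Y$. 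To arrange $\pi(g_0)\in V$ without spoiling the approximation on $T$, I would combine the fibration method with subgroup actions of \cite{CLX} with an induction on $\codim(Z,G)$; the $k$-simplicity of $G$ is used here to prevent $G'$-orbits from being contained in proper $G'$-invariant subvarieties.

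Next, the problem reduces to \textbf{(APSA)} off $\infty_k$ for a $3$-dimensional semi-simple simply connected $k$-group $G'$ with $G'(k_\infty)$ non-compact. Such a $G'$ is a $k$-form of $\operatorname{SL}_2$, and \SA\ off $\infty_k$ is classical. To upgrade to arithmetic purity, I would choose a non-zero polynomial $P\in k[G']$ vanishing on $Z_{g_0}$ (possible since $Z_{g_0}$ is a proper closed subvariety of the irreducible $G'$) and apply an affine combinatorial sieve in the spirit of Bourgain--Gamburd--Sarnak to produce integral points $\gamma\in G'(\CO_k)$ satisfying the congruence conditions prescribed at $T$ for which $P(\gamma)$ is almost prime, hence non-zero, so that $\gamma\notin Z_{g_0}$ and $g_0\gamma\in U(k)$ gives the sought approximation.

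The hard part will be the affine-sieve step: one needs uniform spectral-gap/expander estimates for the Cayley graphs of $G'$ modulo varying ideals, together with a level-of-distribution bound for $P$, so that the sieve's upper and lower bounds can be combined to yield an almost-prime value of $P$ along an integral orbit. A secondary but subtle difficulty will be in the fibration step: arranging $\pi(g_0)\in V$ while retaining the approximation at $T$ requires a careful descent on $\codim(Z,G)$ and full use of the right-$G'$-homogeneity of $G$, since a naive application of \SA\ for $G$ need not land $g_0$ in the generic locus $V$.
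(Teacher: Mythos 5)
Your first step (reducing \textbf{(APSA)} of $G$ to \textbf{(APSA)} of the $3$-dimensional subgroup $G'$ via the $G'$-torsor $G\to G/G'$, the codimension estimate on fibres, a Hilbert-irreducibility/induction argument, and the use of $k$-simplicity to make subgroup translates escape invariant subvarieties) is essentially the paper's Theorem \ref{thm:reduction} and is fine in outline. The genuine gap is in the second half, the $3$-dimensional case. You argue: sieve along $G'(\CO_k)$ with the congruence conditions at $T$, get $P(\gamma)$ almost prime, hence nonzero, hence $\gamma\notin Z_{g_0}$, hence ``$g_0\gamma$ gives the sought approximation.'' But strong approximation off $\infty_k$ for the open set $U$ is density in $U(\RA_k^{\infty_k})$ with its restricted-product topology: a basic open set imposes, besides the conditions at the finite set $S\supseteq T$, the integrality condition $\CU(\CO_v)$ at \emph{every} place $v\notin S\cup\infty_k$, i.e.\ the rational point must avoid the closure of $Z$ modulo all such $v$. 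Knowing $P(\gamma)\neq 0$ (equivalently $\gamma\notin Z_{g_0}(k)$) says nothing at the finitely many places dividing the almost-prime value $P(\gamma)$: at those places $\gamma$ may well reduce into $Z_{g_0}$, and then $g_0\gamma\notin\CU(\CO_v)$, so the approximation fails. Your proposal has no mechanism to repair the point at these residual bad places, and this — not the spectral-gap/counting input, which is exactly what \cite{NS} and \cite{BGS} supply — is the central difficulty of the theorem.

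The paper's proof of the $3$-dimensional case (Theorem \ref{th:mani1}) is built precisely to close this gap, and the extra structure it uses is what your sketch is missing. One does not sieve an arbitrary polynomial vanishing on the bad locus $D$; one first produces (from $G'(k_\infty)$ non-compact, via \cite[Thm.~3.1]{PR} and \cite[\S7.1, Cor.~3]{PR}) a maximal $k$-torus $T\sbt G'$ with $T(k_\infty)$ non-compact, arranges by the conjugation lemma (Lemma \ref{isolem2}, using $\dim G'=3$) that $D$ contains no fibre of $\pi\colon G'\to T\backslash G'$, and sieves the function $f=F\circ\pi\circ(\,\cdot\,P)$ pulled back from the base, with integral models giving a uniform bound $N$ on $\deg\CD_y$. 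Because $f$ is invariant under the $T$-action on fibres, the finitely many bad places (those dividing $f$, all with large residue fields by the sieve) can then be dodged by a second step: the non-compactness of $T(k_\infty)$ makes $\CT(\CO_k)$ infinite, and translating by powers $Q^l$, $0\le l\le r_0N$, of an infinite-order integral element $Q$ — whose reduction has order $>r_0N$ at each bad place — lets a pigeonhole count against $\#\CD_{\pi(P')}(k(v))\le N$ produce a translate avoiding $D$ modulo every bad place, without creating new bad places since $f$ is unchanged. Without this fibration-plus-torus-orbit structure (or some substitute controlling the point at the places dividing the sieved value), the step ``almost prime $\Rightarrow$ sought approximation'' does not go through.
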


\begin{rems}\hfill
	\begin{enumerate}
		\item When $n=3$, the affine quadric in Theorem \ref{cor:anisotropic} (ii) can have non-trivial Brauer group and hence Brauer-Manin obstruction to integral points, while if $n\geqslant 4$ there is no such obstruction (cf. \cite[\S1.2]{Bor} and \cite[\S5.6,~\S5.8]{CTX09}). We actually show that \APSA off $\infty_{k}$ \emph{with Brauer-Manin obstruction} (cf. \cite[Def.~2.1]{CLX}) holds for the affine quadratic surface $q(x_1,x_2,x_3)=a_0$ in $\BA^3$.
		\item When $k$ is not totally real, the non-compactness conditions in Theorems \ref{cor:anisotropic} and \ref{thm:anisotropic} become vacuous.
	\end{enumerate}
\end{rems}




\subsection{Strategies and organization of the paper}
Our strategy relies on a combination of a fibration method with group actions and an affine linear sieve due to Sarnak and his collaborators. The former is developed in Section \ref{se:isotropic} and applied to the proof of a more general version of Theorem \ref{thm:isotropic}, i.e. Theorem \ref{thm:reduction}, and to the anisotropic cases in Section \ref{se:anisotropic}. Roughly speaking, having fixed some closed subset $D$ of a semisimple group $G$, a non-trivial subgroup $H$ action (up to conjugation) can ``increase'' the codimension of $D$ (Lemma \ref{isolem2}). Then the proof of Theorem \ref{thm:reduction} is achieved by considering the fibration $G\to G/H$ via an induction argument on the codimension of $D$.
A large part of Section \ref{se:anisotropic} is devoted to proving Theorem \ref{th:mani1} concerning three-dimensional anisotropic groups. After a careful choice of fibration into families of tori, a major difficulty that remains to overcome is to find integral points that can ``avoid'' the given codimension two subset modulo infinitely many primes. In doing so, we make use of torus actions, and we appeal to the aforementioned sieve argument which allows to produce almost-prime values with only large prime divisors, the latter constituting Section \ref{se:sieve}. More explanations on the proof of Theorem \ref{th:mani1} are postponed to Section \ref{se:overview41}. 
Finally, in Section \ref{se:affquad}, combining Theorem \ref{th:mani1} with the fibration argument established in Section \ref{se:isotropic}, we prove Theorems \ref{cor:anisotropic} and \ref{thm:anisotropic}.

We hope that the ideas developed in this text would provide new insights into Wittenberg's question \ref{q:purity} and have further application.
\subsection{Some more notation and conventions}

Let $G$ be an algebraic group over $k$. Unless otherwise specified, subgroups of $G$ and irreducible components of subsets of $G$ are defined over $k$. We shall write e.g. $H^\prime\subset G_K$ to denote a subgroup $H^\prime$ of $G_K$ defined over a field extension $K$ of $k$. 

By convention, $\dim(\varnothing)=-1$.

 The letter $p$ is always reserved for a prime number, and $\varepsilon$ denotes an arbitrarily small positive parameter that can be rescaled by constant multiples. For two real-valued functions $f$ and $g$ with $g$ non-negative, Landau's symbol $f=O(g)$ and Vinogradov's symbol $f\ll g$ (or $g\gg f$) are used interchangeably, meaning that there exists $C>0$ such that $|f|\leqslant Cg$.  Any dependence of the implied constant $C$ on extra parameters will be specified as subscripts. We write $f\asymp g$ if $f\ll g$ and $g\ll f$ hold simultaneously.
 We write $\mu(\cdot)$ for the Möbius function.
\section{Isotropic case}\label{se:isotropic} 
Our ultimate goal is to prove Theorem \ref{thm:isotropic} via establishing the following general result. 

\begin{thm}\label{thm:reduction}
	Let $k$ be a number field and $S\sbt \Omega_k$ be a finite set of places.
	Let $G$ be a semisimple simply connected $k$-simple linear algebraic group over $k$.
	If $G$ contains a non-trivial connected closed subgroup $H$ that satisfies \APSA off $S$,  then $G$ also satisfies \APSA off $S$.
\end{thm}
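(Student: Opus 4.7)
The plan is to use the smooth morphism $\pi\colon G \to Y := H\backslash G$ (a left $H$-torsor in the fppf topology), combined with strong approximation off $S$ for $G$ and the hypothesis that $H$ satisfies \textbf{(APSA)} off $S$. Fix an open $U \subseteq G$ with $Z := G\setminus U$ of codimension at least $2$, and fix an adelic point $(x_v) \in U(\RA_k^S)$ to approximate. For each $g\in G$, the fibre $\pi^{-1}(\pi(g)) = Hg$ is identified with $H$ via $h\mapsto hg$, under which $U\cap Hg$ corresponds to the open subvariety
\[
V_g := \{h\in H : hg\in U\} \subseteq H.
\]

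The geometric heart of the argument---presumably encapsulated in Lemma~\ref{isolem2}---is the existence of a proper closed subvariety $W \subsetneq Y$ with the property that $\codim(H\setminus V_g, H)\geq 2$ for every $g\in G$ with $\pi(g)\notin W$. I would split on whether $\pi(Z) \subseteq Y$ is dense: if not, the generic fibre of $\pi|_Z$ is empty and one takes $W = \overline{\pi(Z)}$; if yes, the upper semicontinuity of fibre dimension for $\pi|_Z\colon Z \to Y$ combined with $\dim Z \leq \dim G - 2 = \dim Y + \dim H - 2$ forces the generic fibre of $\pi|_Z$ to have dimension at most $\dim H - 2$, whence the closed locus $W$ where this fails is proper in $Y$.

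Next I would invoke \textbf{(SA)} off $S$ for $G$. Because $H$ satisfies \textbf{(SA)} off $S$, the group $H_S$ is non-compact, hence so is $G_S$, and the Kneser--Platonov theorem then yields \textbf{(SA)} off $S$ for $G$. After perturbing the given adelic point $(x_v)$ inside $U(\RA_k^S)$ so that it lands in the open subset $(U\setminus \pi^{-1}(W))(\RA_k^S)$, I would choose $g_0 \in G(k)$ approximating $(x_v)$ closely enough that $g_0 \in U(k_v)$ and $\pi(g_0)\notin W$ for every $v\notin S$. Then $V_{g_0}\subseteq H$ is open with complement of codimension $\geq 2$, and the constant adelic point $(e)$ lies in $V_{g_0}(\RA_k^S)$ (because $g_0 \in U(k_v)$ at every $v\notin S$, so $e\cdot g_0 = g_0 \in U$ locally). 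Applying \textbf{(APSA)} off $S$ for $H$ to $V_{g_0}$ yields $h\in V_{g_0}(k)$ arbitrarily close to $(e)$; continuity of multiplication in $G$ then ensures that $u := hg_0 \in U(k)$ is close to $g_0$, hence to $(x_v)$, in $G(\RA_k^S)$.

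I expect the main obstacle to be establishing the generic-fibre codimension lemma and, more subtly, ensuring that the $H$-translation gives the codimension bound uniformly rather than only generically; this appears to be what the authors organise via an induction on $\codim(G\setminus U, G)$, starting from the base case $U = G$ (handled by Kneser--Platonov) and iteratively trading a codimension-$c$ complement for a larger one via the fibration trick until one recovers the full group. A minor secondary issue is arranging $g_0 \in G(k)$ to lie simultaneously inside $U$ locally and outside $\pi^{-1}(W)$, which is a routine combination of strong and weak approximation for $G$ at the places where $(x_v)$ might already sit on $\pi^{-1}(W)$.
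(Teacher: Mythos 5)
Your overall shape (fibre over $Y=H\backslash G$, a codimension-in-the-fibre lemma, then \textbf{(APSA)} for $H$ inside a well-chosen fibre) is the same as the paper's, and your semicontinuity argument for the bad locus $W\subsetneq Y$ is essentially \cite[Prop.~3.5]{CLX}, which the paper also uses. But there is a genuine gap at the decisive step. Strong approximation for $U$ means hitting opens of $U(\RA_k^S)$ in the adelic topology of $U$, i.e.\ sets of the form $\prod_{v\in S'}W_v\times\prod_{v\notin S'\cup S}\CU(\CO_v)$: at all but finitely many places the rational point must \emph{reduce modulo $v$ outside} $\CD=\CG\setminus\CU$. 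This topology is strictly finer than the subspace topology from $G(\RA_k^S)$ (compare id\`eles inside ad\`eles), so your final sentence — $u=hg_0$ is ``close to $(x_v)$ in $G(\RA_k^S)$'' — proves nothing about membership in the prescribed open of $U(\RA_k^S)$; if closeness in $G(\RA_k^S)$ sufficed, \textbf{(APSA)} for $G$ would follow trivially from Kneser--Platonov. Likewise, choosing $g_0$ by \textbf{(SA)} for $G$ gives no control, at the infinitely many places outside the finite level set, over whether the coset $Hg_0$ even meets $\CU(\CO_v)$ integrally. One can try to repair this by taking $h$ in adelic opens of $V_{g_0}(\RA_k^S)$ (so that $h\bmod v$ avoids the closure of $H\setminus V_{g_0}$ for almost all $v$), but the comparison between the integral model of $V_{g_0}$ and the translated $\CD$ fails at finitely many places depending on $g_0$ (vertical components of $\CD g_0^{-1}\cap\CH$), and at such a place, which lies outside $S'$, there may be no $h$ at all with $hg_0\in\CU(\CO_v)$; your argument has no tool to exclude this.

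This is exactly the difficulty the paper's proof is organised to avoid, and it is where Lemma~\ref{isolem2} and the induction really enter (not merely for ``uniformity'', as you guess). The paper conjugates $H$ so that the $H$-invariant part $D'\sbt D$ has $\dim D'<\dim D$, and then applies strong approximation not to $G$ but to $U_1=G\setminus D'$, whose complement has codimension $\geqslant c+1$, using the inductive hypothesis; the point is sought in $W_1=\pi_{U_1}^{-1}(\pi_U(W))$, where $\pi_U(W)\sbt V_1(\RA_k^S)$ is open by \cite[Thm.~4.5]{Conrad}. Finding $x\in U_0(k)\cap W_1$ (a Hilbert-set argument guaranteeing the fibre codimension) ensures that $y=\pi(x)$ has its adelic image in $\pi_U(W)$, i.e.\ at \emph{every} $v\notin S$ the fibre $G_y$ meets the prescribed open set, integrally at almost all $v$; hence $W\cap U_y(\RA_k^S)$ is a non-empty open of $U_y(\RA_k^S)$ and \textbf{(APSA)} for $H\cong G_y$ finishes. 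In your write-up this transfer of the integral avoidance conditions to the base, and the inductive use of \textbf{(SA)} for $G\setminus D'$ that makes it possible, are missing, so the proposal as it stands does not close.
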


\subsection{Subgroup action on closed subsets}
We start by proving two technical lemmas. We are grateful to Harpaz for useful exchanges from which our arguments here are inspired.
In this section, let $k$ be a number field, $G$ be a non-trivial semisimple simply connected $k$-simple linear algebraic group over $k$.
Let us fix in this part $H\sbt G$ a non-trivial connected closed subgroup. For $g\in G$, let $H_g:=gH_{k(g)}g^{-1}$. 

\begin{lem}\label{isolem1}
Let $E,D$ be two proper non-empty closed subsets of $G$ defined over $k$ with $E\sbt D$ and $\dim(E)=\dim(D)$.
Then the set 
$$\Psi_{H,E,D}:=\{g\in G:\ H_g\cdot E_{k(g)}\not\subset D_{k(g)}\}$$
is a non-empty open subset of $G$ over $k$.
\end{lem}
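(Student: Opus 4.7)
My plan is to prove openness and non-emptiness separately; the former is routine, while the latter is the substantive part in which the hypothesis $\dim E = \dim D$ enters crucially. For openness, I would introduce the morphism $\mu: G\times H\times E\to G$, $(g,h,e)\mapsto ghg^{-1}e$, and observe that $\Phi$ coincides with the image of the open subset $\mu^{-1}(G\setminus D)$ under the first projection $\pi_G: G\times H\times E\to G$. Since $H\times E$ is of finite type over $k$, the projection $\pi_G$ is flat, hence an open morphism, so $\Phi$ is open in $G$.

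For non-emptiness, I would argue by contradiction: suppose $gHg^{-1}\cdot E\subseteq D$ for every $g\in G(\bar k)$, equivalently that $A\cdot E\subseteq D$ where $A:=\bigcup_g gHg^{-1}$. Passing to the closure, $\overline{A}\cdot E\subseteq D$. Because $H$ is connected, $\overline{A}$ is irreducible (as the closure of the image of the irreducible variety $G\times H$ under $(g,h)\mapsto ghg^{-1}$), conjugation-invariant, and contains both $e$ and $H$. By the standard result of Borel on subgroups generated by irreducible subvarieties through the identity (cf.\ \cite{Bo}), there exists $N\geq 1$ such that $\overline{A}^{N}$ is a closed, connected subgroup of $G$; it is normal by conjugation-invariance of $\overline{A}$, and containing the non-trivial $H$, almost $k$-simplicity of $G$ forces $\overline{A}^{N}=G$.

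The crux is then to deduce $\overline{A}\cdot E^{\top}\subseteq E^{\top}$, where $E^{\top}$ denotes the union of the top-dimensional irreducible components $E_1,\ldots,E_s$ of $E$. Each $E_i$ is irreducible of dimension $\dim D$ and contained in $D$, so it coincides with some top-dimensional component $D_{j(i)}$ of $D$. For $a\in\overline{A}$, the translate $aE_i$ is irreducible of the same dimension $\dim D$ and lies in $D$, hence must equal a top-dimensional component $D_{\psi(a)}$. The fibres $\{a\in\overline{A}: aE_i\subseteq D_j\}=\bigcap_{e\in E_i} D_j e^{-1}$ are closed; since $\overline{A}$ is irreducible, the sorting map $a\mapsto\psi(a)$ into the finite set of top-dimensional components of $D$ must be constant, and evaluating at $a=e$ forces $aE_i=E_i$ for all $a\in\overline{A}$ and all $i$. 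Iterating $N$ times yields $G\cdot E^{\top}=\overline{A}^{N}\cdot E^{\top}\subseteq E^{\top}$, so $E^{\top}$ is left $G$-invariant, forcing $E^{\top}=G$ and contradicting $\dim E^{\top}=\dim D<\dim G$.

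The main obstacle I anticipate is the dimension-sorting step above: one must justify that $aE_i$ sits inside a single top-dimensional component of $D$, rather than degenerating or spreading across lower-dimensional pieces, and this is secured precisely by the equality $\dim E=\dim D$ together with the irreducibility of $aE_i$. Once this rigidity is established, Borel's structural result combined with the almost $k$-simplicity of $G$ delivers the desired contradiction, completing the proof that $\Phi_{H,E,D}$ is non-empty.
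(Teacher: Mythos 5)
Your proof is correct in substance and its two halves line up with the paper's at the level of key inputs: openness is argued identically (the map $(g,h,e)\mapsto ghg^{-1}e$ composed with the flat, hence open, first projection), and non-emptiness rests on the same two pillars, namely that the subgroup of $G_{\bk}$ generated by all conjugates of $H$ is all of $G_{\bk}$, and a dimension argument exploiting $\dim E=\dim D$. The organization of the non-emptiness step, however, is genuinely different. The paper first settles the special case $E=D$ (some conjugate $H_g$ must fail to preserve any proper closed $E$), and then reduces the general case to it: picking a top-dimensional irreducible component $E'$ of $E$, if $H_g\cdot E'\nsubseteq E'$ then for a suitable component $E\dprime$ one gets $\dim(H_g\cdot E\dprime)>\dim E=\dim D$, so $H_g\cdot E\nsubseteq D$. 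You instead run a single global contradiction: assuming every conjugate maps $E$ into $D$, you form $\overline{A}$, invoke Borel's generation theorem to get $\overline{A}^N=G$, and then use the rigidity ("sorting") argument that each top-dimensional component of $E$ is a top-dimensional component of $D$, that translation by any $a\in\overline{A}$ sends it to such a component, and that irreducibility of $\overline{A}$ forces the assignment to be constant, hence equal to its value at $a=e$; iterating gives $G\cdot E^{\top}\subseteq E^{\top}$, a contradiction. This rigidity step is sound and correctly identifies where $\dim E=\dim D$ enters; it buys a one-pass argument at the cost of invoking the generation theorem for the full union of conjugates, whereas the paper's reduction produces an explicit $g$ in $\Phi_{H,E',E'}\subset\Phi_{H,E,D}$ and only needs the generated-subgroup argument in the case $E=D$.

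One point needs patching: when you conclude "$\overline{A}^N$ is normal, contains $H$, so almost $k$-simplicity of $G$ forces $\overline{A}^N=G$", the normality you have established is normality in $G_{\bk}$, while almost $k$-simplicity only excludes non-trivial proper connected normal subgroups \emph{defined over $k$}. For groups such as $\operatorname{Res}_{K/k}\operatorname{SL}_2$, the group $G_{\bk}$ has plenty of proper connected normal subgroups, so the deduction as written is incomplete. The fix is the one the paper makes explicit: since $H$ is a $k$-subgroup, the family of its $\bk$-conjugates is Galois-stable, hence the subgroup they generate is Galois-stable and therefore defined over $k$ (as $k$ is perfect); only then does $k$-simplicity yield $\overline{A}^N=G_{\bk}$. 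With this remark added (and the routine observation that $\overline{A}$ is symmetric, so Borel's theorem indeed gives a power $\overline{A}^N$), your argument is complete.
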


\begin{proof}
Firstly we prove that $\Psi_{H,E,D}\sbt G$ is open.
Let $p_1: G\times H\times E\to G$ be the first projection (hence flat and open) and define
\begin{align*}
	\phi:  G\times H\times E&\to G\\
	(g,h,e)&\mapsto ghg^{-1}e.
\end{align*}
Then $\Psi_{H,E,D}=p_1(\phi^{-1} (G\setminus D))\subset G$ is open, since $D$ is closed.

Next we prove that $\Psi_{H,E,D}\sbt G$ is non-empty. 

In the first place we treat the case where $E=D$. Namely, we show that for any non-empty proper closed subset $E\subset G$, we have $\Psi_{H,E,E}\neq\varnothing$. 
Let $H'\sbt G$ be the normal closure of $H$, 
i.e. $H'_{\bk}\sbt G_{\bk}$ is the closed subgroup generated by $gHg^{-1}$ for all $g\in G(\bk)$ and $H'_{\bk}$ can be defined over $k$ because it contains all Galois conjugates of its $\bk$-components.
 Then $H'\sbt G$ is the minimal closed normal subgroup containing $H$.
 We claim that $H'$ is connected.
 Indeed, the action of the connected group $G$ on $H'$ by conjugation preserves the identity component $H^{\dprime}$ of $H^\prime$ 
  and hence $H^{\dprime}$ is also a closed normal subgroup of $G$ containing $H$, and therefore $H^{\dprime}=H'$ since $H'$ is minimal. This proves the claim.
 Since $G$ is $k$-simple and $H^{\prime}$ is normal and connected with $\dim H^\prime\geqslant \dim H>0$ by the assumption that $H$ is non-trivial, we deduce that $H^{\prime}=G$.  
If $\Psi_{H,E,E}=\varnothing$, then $G_{\bk}=G_{\bk}\cdot E_{\bk}=H^\prime_{\bk}\cdot E_{\bk}\sbt E_{\bk}$, which contradicts $E\neq G$.

Now we return to the general case of two closed sets $E,D$ as in the assumption of the lemma. Let $E'\sbt E$ be an irreducible component of the same dimension as $E$. 
From the previous paragraph, we have shown $\Psi_{H,E',E'}\neq\varnothing$. We now show
$$\Psi_{H,E',E'}\subset \Psi_{H,E,D},$$
so that $\Psi_{H,E,D}\neq\varnothing$.
Indeed, for any $g\in \Psi_{H,E',E'}$, we have $ H_g\cdot E'_{k(g)}\not\subset E'_{k(g)}$. So there exists an irreducible $k(g)$-component $E^{\dprime}$ of $E'_{k(g)}$ of the same dimension as $E'_{k(g)}$, such that $H_g\cdot E^{\dprime}\not\subset E^{\dprime}$. Since $\overline{H_g\cdot E^{\dprime}}$ is irreducible, and $E^{\dprime}\subset H_g\cdot E^{\dprime}$, one has $\dim(H_g\cdot E^{\dprime})>\dim(E^{\dprime})=\dim(E^{\prime})=\dim (E)=\dim(D)$.
Thus $H_g\cdot E'_{k(g)}\not\subset D_{k(g)}$ and \emph{a fortiori} $H_g\cdot E_{k(g)}\not\subset D_{k(g)}$. This shows that $g\in\Psi_{H,E,D}$, as desired.
\end{proof}

\begin{lem}\label{isolem2}
	 Let $D$ be a proper non-empty closed subset of $G$ over $k$.
Then there exist $g\in G(k)$ and a closed subset $D'\sbt D$ over $k$ such that: $$\dim(D')< \dim(D),\quad H_g\cdot D'=D',\quad
 \text{and} \quad H_g\cdot (G\setminus D)=G\setminus D'.$$
\end{lem}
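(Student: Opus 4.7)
The plan is to construct $D'$ explicitly from $D$ and a well-chosen $g\in G(k)$, then use \autoref{isolem1} to force the dimension drop. For any candidate $g\in G(k)$, set
\[
D'_g := G\setminus\bigl(H_g\cdot(G\setminus D)\bigr) = \{x\in G:\ H_g\cdot x\subset D\}.
\]
The shearing $(h,x)\mapsto(h,hx)$ being an automorphism of $H_g\times G$, the left-multiplication map $\mu:H_g\times G\to G$ factors as an isomorphism followed by the flat projection onto $G$; hence $\mu$ is open, $H_g\cdot(G\setminus D)$ is open in $G$, and $D'_g$ is closed. By construction $D'_g$ is $H_g$-stable, is contained in $D$ (since $e\in H_g$), and is defined over $k$ (since $g\in G(k)$). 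A direct orbit-wise check using that $H_g$ is a group yields $H_g\cdot(G\setminus D)=G\setminus D'_g$.

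The remaining task is to select $g\in G(k)$ so that $\dim D'_g<\dim D$. Let $D_1,\dots,D_m$ be the $k$-irreducible components of $D$ of maximal dimension. Since $D'_g\subset D$ is closed and $H_g$-stable, the equality $\dim D'_g=\dim D$ would force some $D_j$ to satisfy $D_j\subset D'_g$, which in turn is equivalent to $H_g\cdot D_j\subset D$. Therefore the desired strict inequality translates into the simultaneous non-inclusion condition
\[
g \in \bigcap_{j=1}^{m}\Phi_{H,D_j,D}.
\]

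This is where \autoref{isolem1} is invoked. Applied to each pair $D_j\subset D$ (which satisfies $\dim D_j=\dim D$ and $D\subsetneq G$), it guarantees that every $\Phi_{H,D_j,D}$ is a non-empty open $k$-subvariety of $G$. Since $G$ is geometrically integral, the finite intersection
\[
V := \bigcap_{j=1}^{m}\Phi_{H,D_j,D}
\]
is again a non-empty open subset of $G$ defined over $k$. Finally, $G$ being semi-simple simply connected over the number field $k$ is unirational over $k$, so $G(k)$ is Zariski dense in $G$ and $V(k)\neq\varnothing$; for any $g\in V(k)$ the closed subset $D':=D'_g$ satisfies the three asserted properties.

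The main obstacle lies in the reduction of the second step: one must rule out that \emph{any} top-dimensional $k$-component of $D$ survives inside $D'_g$, not merely one of them. The key observation that $D_j\subset D'_g\Leftrightarrow H_g\cdot D_j\subset D$ hinges crucially on the $H_g$-invariance built into the definition of $D'_g$. Once this reduction is in place, \autoref{isolem1} applied component-by-component, combined with the Zariski density of $G(k)$, delivers the required point $g\in G(k)$.
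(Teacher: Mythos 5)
Your proof is correct and follows essentially the same route as the paper: you define $D'=G\setminus\bigl(H_g\cdot(G\setminus D)\bigr)$, reduce the dimension drop to the condition $H_g\cdot D_j\nsubseteq D$ for each top-dimensional component $D_j$, and obtain a suitable $g\in G(k)$ from Lemma \ref{isolem1} together with the Zariski density of $G(k)$ (the paper cites weak approximation rather than unirationality, but this is immaterial). The only cosmetic difference is that you spell out the openness of the orbit map and the equivalence $D_j\subset D'\Leftrightarrow H_g\cdot D_j\subset D$, which the paper leaves implicit.
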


\begin{proof}
Let $\{D_i\}_{i\in I}$ be the irreducible components of $D$ of the same dimension, and for each $i\in I$, let $\Psi_{H,D_i,D}$ be the set defined in Lemma \ref{isolem1}.
Since $G(k)$ is Zariski dense in $G$,\footnote{Such groups also satisfy \textbf{(WA)} by \cite[Prop.~7.9]{PR}.} and each $\Psi_{H,D_i,D}$ is open and dense in $G$, we have $$G(k)\cap (\cap_{i\in I} \Psi_{H,D_i,D})\neq\varnothing.$$
Having fixed $g\in G(k)\cap (\cap_{i\in I} \Psi_{H,D_i,D})$, let us define $$D':=G\setminus (H_g\cdot (G\setminus D)).$$
Then it is clear that $G\setminus D'= H_g\cdot (G\setminus D)$. Since $H_g\cdot(G\setminus D)\subset G\setminus D$, we get $D'\sbt D$. If there exists $d\in D^\prime$ such that $H_g\cdot d\not\subset D^\prime_{k(d)}$, then there exists $h\in H_g$ such that $h\cdot d \in G\setminus D^\prime=H_g\cdot(G\setminus D)$. This implies $d\not \in D$, a contradiction. Consequently $H_g\cdot D'=D'$. Since $H_g\cdot (G\setminus D)$ is open and defined over $k$, $D^\prime$ is closed and defined over $k$. 
By the definition of $\Psi_{H,D_i,D}$, we have $H_g\cdot D_i\not\subset D$ for all $i\in I$. So one has $D_i\not\subset D'$ for any $i\in I$, for otherwise $H_g\cdot D_i\subset H_g\cdot D^\prime =D^\prime\subset D$, a contradiction.
Thus $\dim(D')<\dim(D)$.
\end{proof}

\subsection{Proof of Theorem \ref{thm:reduction} -- a fibration method with subgroup actions}
By the assumption that $H$ satisfies the \APSA off $S$, one has that $H(k_S)$ is not compact.
Then $G(k_S)$ is also not compact.
Let $$\pi: G\to Y:=G/H$$ be the quotient map.
Let $D\sbt G$ be a closed subset over $k$ of codimension $c\geqslant 2$ and $U:=G\setminus D$. 
Now we prove that $U$ satisfies \SA off $S$.
By induction, we may assume $G$ satisfies \APSA off $S$ of codimension $c+1$, the initial case being $c=\dim G$ and following from Theorem \ref{rmk:KneserPlatonov}.

By Lemma \ref{isolem2}, upon replacing $H\sbt G$ by some $H_g$ for $g\in G(k)$ if necessary, we may assume there exists a closed subset $D'\sbt D$ over $k$ such that
$H\cdot D'=D'$, $\dim(D')<\dim(D)$ and $\pi(U)=Y\setminus \pi(D')$.
Let us define
$$U_1:=G\setminus D',\ \ V_1:=Y\setminus \pi(D'),\ \  \pi_{U_1}:=\pi|_{U_1}: U_1\to V_1\ \  \text{and}\ \  \pi_U:=\pi|_{U}: U\to V_1.$$
Then $\pi_{U_1}$ is a $H$-torsor and $\pi_U$ is smooth surjective with geometrically integral fibres. We get the following commutative diagram:
$$\xymatrix{U\ar@{->>}[rd]_{\pi_U}\ar@{^{(}->}[r]&U_1\ar@{^{(}->}[r]\ar@{->>}[d]^{\pi_{U_1}}&G\ar@{->>}[d]^{\pi}&D\ar@{_{(}->}[l]&D'\ar@{_{(}->}[l]\ar[d]\\&V_1\ar@{^{(}->}[r]&Y&&\pi(D').\ar@{_{(}->}[ll]
}$$
By \cite[Prop. 3.5]{CLX}, there exists an open subset $V_0$ of $V_1$ over $k$ such that for any $y\in V_0$, we have 
\begin{equation}\label{eq:codim}
\codim(D_y,G_y)\geqslant\codim(D,G)\geqslant c,
\end{equation} where $D_y,G_y$ are the fibres of $\pi$ over $y$.
Define $U_0:=\pi_{U}^{-1}(V_0)$.

For any non-empty open subset $W\sbt U(\RA_k^{S})$, we want to show that $U(k)\cap W\neq\varnothing$.
By \cite[Thm. 4.5]{Conrad}, the set $\pi_U(W)\sbt V_1(\RA_k^{S})$ is open, and by continuity,
the set $$W_1:=\pi_{U_1}^{-1}(\pi_U(W))\sbt U_1(\RA_k^{S})$$ is open.
Since $\codim(D',G)\geqslant \codim(D,G)+1\geqslant c+1$ by induction hypothesis, the variety $U_1$ satisfies \SA off $S$, so $U_1(k)\cap W_1\neq\varnothing$.
By \cite[Prop.~2.3 (2)]{CLX}, we have $U_0(k)\cap W_1\neq \varnothing$.
Let us choose
$ x_0\in U_0(k)\cap W_1$ and let $y_0:=\pi(x_0)$. Then $x_0\in U_{y_0}(k)\subset G_{y_0}(k)$, and thus $G_{y_0}\simeq H$. Since  $y_0\in V_0(k)\cap \pi_{U_1}(W_1)=V_0(k)\cap \pi_U(W)\neq \varnothing$,
the set $W\cap U_{y_0}(\RA_k^{S})$ is a non-empty open adelic neighbourhood of $U_{y_0}(\RA_k^{S})$, where $U_{y_0}$ is the fibre of $\pi_U$ over $y_0$.
 Since $H$ satisfies \APSA off $S$, the open variety $U_{y_0}=G_{y_0}\setminus D_{y_0}$ satisfies \SA off $S$, so $U_{y_0}(k)\cap (W\cap U_{y_0}(\RA_k^{S}))\neq\varnothing$.
Therefore we have proven $U(k)\cap W\neq\varnothing$. This finishes the proof.
\qed

\begin{rem}
	The reason why our induction process from the $c=2$ case to the $c=1$ case fails is that the condition $\codim(D_{y_0},G_{y_0})\geqslant 1$ deduced from \eqref{eq:codim} is insufficient to produce rational points in $W\cap U_{y_0}(\RA_k^{S})$ by the definition of \APSA (Definition \ref{def:sa} (2)).
\end{rem}

\subsection{Proof of Theorem \ref{thm:isotropic}}

	By Borel-Tits' theory (Remark \ref{rmk:BorelTits} (i)), the assumption that the group $G$ being $k$-isotropic is equivalent to saying that $G$ contains a closed subgroup isomorphic to $\BG_{\operatorname{a}}$ over $k$.
	The statement now follows from Theorem \ref{thm:reduction} applied to $\BG_{\operatorname{a}}\sbt G$, for which \SA off any place is known by the Chinese Remainder Theorem. \qed

\section{Almost prime polynomial values}\label{se:sieve}
In this section we study quantitative growth of almost-prime integral polynomial values on semisimple simply connected groups. 
We shall prove Theorem \ref{th:primepoint}, asserting the finiteness of the \emph{saturation number} \cite[p.~361]{NS} of a non-zero regular function 
evaluated on a given principal congruence subgroup.
For further application, we derive Corollary \ref{cor:primepoint} as a key tool in the proof of Theorem \ref{th:mani1}.
Our approach is based on the \emph{affine linear sieve} introduced in \cite{NS}.

\subsection{Main result}\label{se:mainsieve}
In this section we fix $G$ a semisimple simply connected linear algebraic group over $\BQ$. Let us fix an embedding $G\hookrightarrow\GL_{n,\BQ}$. We choose the integral model $\CG:=\overline{G} \subset\GL_{n,\BZ}$, i.e. the integral closure of $G$ in $\GL_{n,\BZ}$. The coordinate ring $\BQ[G]$ is a unique factorization domain. We fix a height function $\|\cdot\|$ on $\GL_{n}(\BZ)$. For example, we can define for $g=(g_{i,j})\in \GL_n(\BZ)$, $\|g\|:=\max_{1\leqslant i,j\leqslant n} |g_{i,j}|$. Let $$\Gamma:=\CG(\BZ),$$ and for any $\alpha\in\BN_{\geqslant 1}$, define 
$$\Gamma_{\alpha}:=\{g\in \CG(\BZ):\ g\equiv \operatorname{id}_\CG \mod \alpha\},$$
the principal congruence subgroup of level $\alpha$.

We now state the main result of this section. 

\begin{thm}\label{th:primepoint}
	Let $f\in\BQ[G]$ be a non-zero regular function and $\alpha_0\in\BN_{>1}$. Assume
	that $f$ takes integer values on $\Gamma_{\alpha_0}$.
	Let $N:=\gcd(f(\Gamma_{\alpha_0}))$ and $S \subset \Omega_{\BQ}\setminus\infty_{\BQ}$ be a finite set of places such that $f\in\BZ_S[\CG]$. Assume moreover that every $\BQ$-simple factor of $G$ is not compact over $\BR$. Then there exist $\beta_1>0,\beta_2>0$, depending only on $\CG$, $f$, $\alpha_0$, $S$ and $N$, such that for any large enough $T$,
	\begin{equation}\label{eq:loweralmostprime}
	\#\left\{g\in \Gamma_{\alpha_0}: \|g\|<T ,\gcd\left(f(g),\prod_{\substack{p\nmid\alpha_0 N,p\not\in S\\p<T^{\beta_1}}}p\right)=1 \right\}\gg  \frac{\# \{g\in \CG(\BZ): \|g\|<T\}}{(\log T)^{\beta_2}}.
	\end{equation}
\end{thm}
In this section, as well as in $\S$\ref{se:proofprimepoint} below, unless otherwise mentioned, all implied constants are allowed to depend on $\CG$, $\|\cdot\|$, $f$, $\alpha_0$, $S$ and $N$.
\begin{rems}\label{rmk:sieveremark}
	\hfill
	\begin{enumerate}
		\item Thanks to the works \cite[Thm.~1.2]{DRS}, \cite[Thm.~1]{Maucourant}, \cite[Thm.~2.7]{Gorodnik-Weiss}, we know that there exist rational numbers $a>0,b\geqslant 0$ such that 
		\begin{equation}\label{eq:asympab}
		\# \{g\in \CG(\BZ): \|g\|<T\}\asymp T^a(\log T)^b.
		\end{equation}
		See e.g. \cite[\S3.1]{NS} for an interpretation of the constants $a,b$. So Theorem \ref{th:primepoint} implies that the number of $\Gamma_{\alpha_0}$-lattice points $g$ of bounded height $T$ such that $f(g)$ is free of prime factors less than $T^{\beta_1}$, 
		except for those dividing $\alpha_0,S$ or $N$, goes to infinity as $T$ grows.
	
		\item  In \cite[Thm.~1.7, Cor.~1.8,~(4.32)]{NS}, under the assumption that $\gcd(f(\Gamma))=1$,\footnote{This is called \emph{weakly primitive} in \cite[p. 361]{NS}.} and that $f$ factors into $t(f)$ distinct irreducible factors over $\BQ$ which are also assumed to be absolutely irreducible, it is shown that for $\alpha_0=1$, \eqref{eq:loweralmostprime} has the expected magnitude of growth. Namely the power on $\log T$ is $t(f)$. Our result deals with arbitrary non-zero $f$ and also implies the finite saturation of the pair $(f,\Gamma_{\alpha_0})$, using e.g. \cite[Lem.~4.2]{NS}.
	\end{enumerate}
\end{rems}

\subsection{An application}

In this section, let $k$ be a number field and $G$ be a semisimple simply connected $k$-simple linear algebraic group over $k$.  We fix an embedding $G\sbt \GL_{n,k}$ and we choose the integral model $\CG$ to be the integral closure of $G$ in $\GL_{n,\CO_k}$.


Theorem \ref{th:primepoint} is sufficient to deduce the following technical result. The way in which we formulate it is tailored to our application in Section \ref{se:anisotropic}.

\begin{cor}\label{cor:primepoint}
	Let $S'\sbt (\Omega_{\BQ}\setminus \infty_{\BQ})$, $S\sbt (\Omega_k\setminus \infty_k)$ be two finite subset of places such that $\CO_{k,S}$ is a principal ideal domain and finite \'etale over $\BZ_{S'}$.
	Let $$\Phi:=\prod_{v\in S}\Phi_v\times \prod_{v\notin S\cup \infty}\CG(\CO_v)$$ be a subgroup of $G(\RA_k^{\infty_k})$, where $\Phi_v\sbt G(k_v)$ is an open compact subgroup for each $v\in S$.
	Let $f\in \CO_{k,S}[\CG]$ be a regular function such that $f(\CG(\CO_v))\cap \CO_v^{\times}\neq\varnothing$ for any $v\in \Omega_{k}\setminus (S\cup \infty_k)$. Assume that  $G(k_{\infty})$ is not compact.
	Then there exists an integer $r_0\geqslant 1$ depending only on $\CG,\Phi,f,S,S^\prime$ and verifying the following property: for any $M>0$, there exists $g_0\in \CG(\CO_{k,S})\cap \Phi$ 
	such that $f(g_0)$ has at most $r_0$ prime factors in $\CO_{k,S}$, and the cardinalities of their residue fields are larger than $M$.
\end{cor}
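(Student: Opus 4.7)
The plan is to reduce Corollary~\ref{cor:primepoint} to Theorem~\ref{th:primepoint} via Weil restriction of scalars from $k$ down to $\BQ$, and then to absorb the finitely many ``bad'' primes produced by the sieve using the local-unit hypothesis on $f$.

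Set $\tilde{G}:=\Res_{k/\BQ}G$, a semi-simple simply connected $\BQ$-group with $\tilde{G}(\BR)\cong G(k_\infty)$ non-compact, together with an integral model $\tilde{\CG}$ over $\BZ$ for which $\tilde{\CG}(\BZ_p)\cong\prod_{v\mid p}\CG(\CO_v)$ whenever $p$ avoids a finite set $S''$ of rational primes containing those below $S$ and $S'$. Define $\tilde{f}:=N_{k/\BQ}(f)\in\BQ[\tilde{G}]$, whose base change to $\bar\BQ$ factors as $\prod_\sigma f^\sigma$ along the factors of $\tilde{G}_{\bar\BQ}=\prod_\sigma G^\sigma$; since each $f^\sigma$ is separable and these factors live on distinct components, $\tilde{f}$ is a separable, non-zero regular function on $\tilde{G}$ lying in $\BZ_{S''}[\tilde{\CG}]$, and $\tilde{f}(g)=N_{k/\BQ}(f(g))$ for every $g\in G(k)=\tilde{G}(\BQ)$.

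By the hypothesis, for each finite $v\notin S$ pick $g_v\in\CG(\CO_v)$ with $f(g_v)\in\CO_v^\times$ and, by continuity, an open subgroup $K_v\sbt\CG(\CO_v)$ on whose coset $g_vK_v$ the value of $f$ stays a unit; take $g_v=\id$, $K_v=\CG(\CO_v)$ for almost all $v$. Non-compactness of $G(k_\infty)$ together with the Kneser-Platonov theorem give \SA off $\infty_k$ for $G$, so pick $g_0\in G(k)\cap\Phi$ lying in $g_vK_v$ at every finite $v\notin S$. Apply Theorem~\ref{th:primepoint} to the translate $\tilde{f}_{g_0}(h):=\tilde{f}(g_0h)\in\BZ_{S''}[\tilde{\CG}]$, to the excluded set $S''$, and to a principal congruence subgroup $\Gamma_m\sbt\tilde{\CG}(\BZ)$ chosen so that $g_0\Gamma_m$ lies inside $\Phi_v$ at each $v\in S$ and inside $g_vK_v$ elsewhere. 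One obtains $\lambda,\beta>0$ and $N=\gcd(\tilde{f}_{g_0}(\Gamma_m))$ such that, by \eqref{eq:asympab},
$$\#\Big\{h\in\Gamma_m:H(h)<T,\ \gcd\Big(\tilde{f}_{g_0}(h),\prod_{\substack{p<T^\beta,\ p\notin S''\\p\nmid mN}}p\Big)=1\Big\}\gg\frac{\#\{h\in\tilde{\CG}(\BZ):H(h)<T\}}{(\log T)^\lambda}\to\infty.$$
Enlarge $m$ once more so that its prime divisors include those of $N$ outside $S''$: then for every such $h$ one has $g_0h\in g_vK_v$ at every $v$ above any $p\mid mN$ with $p\notin S''$, and hence $f(g_0h)\in\CO_v^\times$ there, so these bad primes cannot divide $\tilde{f}_{g_0}(h)=N_{k/\BQ}(f(g_0h))$ either.

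Consequently every prime divisor of $\tilde{f}_{g_0}(h)$ outside $S''$ is $\geqslant T^\beta$, and the polynomial bound $|\tilde{f}_{g_0}(h)|\ll_f H(h)^C\ll T^C$ (with $C$ depending on $\deg\tilde{f}$) produces at most $r_0:=\lceil C/\beta\rceil+O(1)$ such factors. Each rational prime splits into at most $[k:\BQ]$ primes of $\CO_k$, so $f(g):=f(g_0h)\in\CO_{k,S}$ admits at most $r:=[k:\BQ]r_0$ prime divisors, each of residue characteristic $\geqslant T^\beta$; choosing $T$ with $T^\beta>M$ supplies the required element of $G(k)\cap\Phi$. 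The main obstacle is the apparent circularity in the third paragraph: the bad primes dividing $N$ — which must be neutralised by the local-unit conditions — are revealed only after the sieve has been applied, forcing one to iterate, enlarge $m$, and re-run Theorem~\ref{th:primepoint} with $\Gamma_{m'}$. The iteration terminates after a single step because the new modulus is still supported on $S''$ and on the original prime divisors of $N$, and on each of these primes the second sieve sees $\tilde{f}_{g_0}$ locally unit-valued on $\Gamma_{m'}$, so no genuinely new bad prime can occur in $N'=\gcd(\tilde{f}_{g_0}(\Gamma_{m'}))$.
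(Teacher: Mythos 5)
Your overall route---Weil restriction $\Res_{k/\BQ}G$, the norm $\tilde f=N_{k/\BQ}(f)$, an application of Theorem~\ref{th:primepoint} on a principal congruence subgroup inside $\Phi$, and then the polynomial bound $|\tilde f|\leqslant T^{\theta}$ to cap the number of prime factors exceeding $T^{\beta}$, with the final factor $[k:\BQ]$---is exactly the paper's. The genuine gap is in how you neutralise the primes dividing $N=\gcd(\tilde f_{g_0}(\Gamma_m))$, i.e.\ the primes the sieve does not control. Your second paragraph asks for $g_0\in G(k)\cap\Phi$ lying in $g_vK_v$ at \emph{every} finite $v\notin S$, justified by ``take $g_v=\id$, $K_v=\CG(\CO_v)$ for almost all $v$''. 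That is not available: the hypothesis only says $f$ takes \emph{some} unit value on $\CG(\CO_v)$, not that it is unit-valued on all of $\CG(\CO_v)$; in general the divisor $f=0$ meets $\CG(\CO_v)$ for infinitely many $v$ (already for $f$ a matrix coordinate on $\SL_2$), so $g_vK_v$ is a proper open coset at infinitely many places, and strong approximation cannot meet infinitely many non-trivial congruence conditions simultaneously. The iteration you propose to repair the resulting circularity does not close as written: enlarging the level from $m$ to $m'$ does not place $g_0\Gamma_{m'}$ inside $g_vK_v$ at the newly discovered primes unless $g_0$ itself is re-chosen there, and your termination claim---that no genuinely new bad prime divides $N'$---is argued only at the primes of the old $N$; at a fresh prime $p\nmid m'$, $p\notin S''$, you give no argument, and this is precisely the point that requires proof.

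The missing (and much simpler) step, which is how the paper proceeds, is the following: choose the congruence level $\alpha$ supported on $S'$ with $\Gamma_\alpha\subset\Phi$ (possible because each $\Phi_v$ is open and, by \'etaleness of $\CO_{k,S}/\BZ_{S'}$, the places of $S$ lie over $S'$). Then for any rational prime $p\notin S'$, strong approximation makes $\Gamma_\alpha$ dense in $\prod_{v\mid p}\CG(\CO_v)$, and the hypothesis $f(\CG(\CO_v))\cap\CO_v^{\times}\neq\varnothing$ at every $v\mid p$ (all such $v$ are outside $S$) yields some $g\in\Gamma_\alpha$ with $f(g)$ a unit at all $v\mid p$, hence $p\nmid N_{k/\BQ}(f(g))$ and so $p\nmid N$. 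Thus $N$ is automatically supported on $S'$: no translation by $g_0$, no coset conditions at infinitely many places, and no iteration are needed. Since every prime of $\CO_{k,S}$ lies over a rational prime outside $S'$, every prime of $\CO_{k,S}$ dividing $f(g)$ for the sieved points $g$ lies over a prime $>T^{\beta}$, which is exactly what your final paragraph needs to conclude.
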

The assumption that $\CO_{k,S}$ is finite étale over $\BZ_{S'}$ means precisely that the extension $k/\BQ$ is unramified outside of $S^\prime$, and $S$ is the set of places over $S^\prime$.
\begin{proof}
	Let us consider 
	$$G^\prime:= \Res_{k/\BQ}G\sbt\GL_{n[k:\BQ],\BQ},\quad\CG^\prime:=\Res_{\CO_k/\BZ}\CG,\quad f^\prime:=N_{k/\BQ}(f).$$ 
	The group $G^\prime$ is semisimple simply connected and $\BQ$-simple by \cite[Prop.~A.5.14]{CGP}, and $\CG^\prime$ is a smooth integral model of $G^\prime$ over $\BZ_{S^\prime}$ by \cite[\S7.6~Prop.~5~\&~6]{BLR}.
	 We identify the group $\Phi$ as a subgroup of $G^\prime(\RA_\BQ^{\infty_{\BQ}})$. Take an integer $\alpha_1$ of the form $\prod_{v\in S'}p_v^{m_v}$ with $m_v\in\BN_{\geqslant 1}$ large enough so that the diagonal image of the principal congruence subgroup $\Gamma_{\alpha_1}$ of $\CG^\prime(\BZ)$ in $G^\prime(\RA_\BQ^{\infty_{\BQ}})$ is contained in  $\Phi$. Since $G(k_\infty)$ is not compact, so is $G^\prime(\BR)$. Since $f\in \CO_{k,S}[\CG]$, we have $f^\prime\in\BZ_{S'}[\CG^\prime]$, and by assumption, $f^\prime$ is not identically zero on $\CG^\prime(\BZ_{S^\prime})$.
	 
	 The determinant character $G^\prime\hookrightarrow\GL_{n[k:\BQ]}\xrightarrow{\det}\BG_{\operatorname{m}}$ is trivial since $G^\prime$ is semisimple, so $G^\prime$ embeds into $\SL_{n[k:\BQ],\BQ}\subset\BA^{(n[k:\BQ])^2}$, where we regard $\GL_{n[k:\BQ],\BQ}\subset\BA^{(n[k:\BQ])^2}$.
	 So $$\BQ[G^\prime]=\BQ\left[X_1,\cdots,X_{(n[k:\BQ])^2}\right]/I,$$ the ideal $I$ being generated by finitely polynomials in $\BZ[X_1,\cdots,X_{(n[k:\BQ])^2}]$ including $\det(\mathbf{X})-1$ where $\det(\mathbf{X})$ is the determinant polynomial. Therefore we can choose $$F\in \BZ_{S^\prime}\left[X_1,\cdots,X_{(n[k:\BQ])^2}\right]$$ a lift of $f^\prime$ in such a way that it has minimal degree.
	 And we can multiply $F$ by an integer $R$ whose prime divisors are in $S^\prime$, so that $RF$ has integer coefficients, and in particular $Rf^\prime$ takes integer values on $\Gamma_{\alpha_1}$.
	 Moreover, the hypothesis $f(\CG(\CO_v))\cap \CO_v^{\times}\neq\varnothing$ for any $v\notin S\cup \infty_{k}$ implies that $N:=\gcd(Rf^\prime(\Gamma_{\alpha_1}))$ factorises into prime numbers in $S^\prime$. 
	
	We now apply Theorem \ref{th:primepoint} to the group $G^\prime$ and to the function $Rf^\prime$ evaluated on $\Gamma_{\alpha_1}$. For any $T$ large enough such that $T^{\beta_1}\geqslant M$, we can find $g_0\in\Gamma_{\alpha_1}$ such that $\|g_0\|\leqslant T$ and, according to \eqref{eq:loweralmostprime}, any prime divisor of $Rf^\prime(g_0)$ is either $>T^{\beta_1}$, or $\in S^\prime$. 
	On the other hand, there exists $\vartheta>0$ depending only on $R$ and the degree of $F$ such that for any $g\in\GL_n(\BZ)$ with $\|g\|< T$, we have $|RF(g)|\leqslant T^\vartheta$. Hence for any $g\in\CG^\prime(\BZ)$ with $\|g\|< T$, we also have $|Rf^\prime(g)|\leqslant T^\vartheta$. So the number of distinct prime factors of $Rf^\prime(g_0)$ which are $\geqslant T^{\beta_1}$ is $\leqslant \lfloor\vartheta/\beta_1\rfloor +1$. Returning to the function $f$ and regarding $g_0\in \Phi$, we see that $g_0$ is in the diagonal image of $\CG(\CO_{k,S})$, and we have shown that any $v\in\Omega_{k}\setminus (S\cup\infty_{k})$ dividing $f(g_0)$ satisfies $\#k(v)\geqslant T^{\beta_1}\geqslant M$, and the total number of such $v$ is $\leqslant [k:\BQ](\lfloor \vartheta/\beta_1\rfloor +1)$. Therefore we may take $r_0=[k:\BQ](\lfloor \vartheta/\beta_1\rfloor +1)$ to conclude.
\end{proof}

\subsection{A combinatorial sieve}
Our central analytic input is an elementary combinatorial sieve of Brun-Selberg type, based on the so-called ``Fundamental Lemma''.
It gives estimates for the growth of an integer sequence free of moderately small prime divisors.
If the elements of this sequence have polynomial growth, a direct consequence is that each element sieved out has a uniformly bounded number of prime divisors, hence is``almost-prime''.

Let $\CP$ be a set of prime numbers.
For $z>0$, define $$\CP(z):=\prod_{p<z,p\in\CP} p.$$ 
Let $\CA=(a_i)_{i\in I}$ be a finite sequence of integers indexed by $I$.
	Define $$X:=\#\CA ~(=\# I).$$
	For $d\in\BN_{\geqslant 1}$, we define the subsequence $$\CA_d:=(a_i)_{i\in I_d}$$ of $\CA$ with $I_d:=\{i\in I:d\mid a_i\}\subset I$. 
	Our goal is to estimate the \emph{sifting function}
	$$S(\CA,\CP,z):=\#\{i\in I:\gcd(a_i,\CP(z))=1\}.$$
	We now formulate the following version of the ``Fundamental Lemma'', which is best suitable for our application. See for example \cite[Thm.~6.9, Cor.~6.10]{Friedlander-Iwaniec}, \cite[I.4~Thm.~3]{Tenenbaum}, and \cite[Thm. 7.2]{HR}.
		

\begin{thm}\label{thm:sieve}
	With the notation above, suppose that there exist a multiplicative arithmetic function $\omega:\BN_{\geqslant 1}\to\BR_{\geqslant 0}$ and constants $\kappa>0, A_1>1$ verifying the following hypotheses.
\begin{enumerate}
	\item 
	For any prime $p$, we have 
	$$0\leqslant\frac{\omega(p)}{p}<1;$$
	\item for any real numbers $w_1,w_2$ such that $2\leqslant w_1\leqslant w_2$, we have
	$$\prod_{\substack{w_1\leqslant p\leqslant w_2}}\left(1-\frac{\omega(p)}{p}\right)^{-1}\leqslant A_1\left(\frac{\log w_2}{\log w_1}\right)^\kappa.$$
\end{enumerate}
Then there exist constants $\lambda\in ~]0,(9\kappa)^{-1}[~,\tau>0$ depending only on $\kappa,A_1$ such that, for any finite sequence of integers $\CA$ and any real numbers $y\geqslant 2$ and $z\in[2,y^\lambda]$, we have
\begin{equation}\label{eq:sifting}
S(\CA,\CP,z)\geqslant \tau X\prod_{p\mid\CP(z)}\left(1-\frac{\omega(p)}{p}\right)+O\left(\sum_{\substack{d\leqslant y\\d\mid \CP(z)}}\left|\#\CA_d-\frac{\omega(d)}{d}X\right|\right),
\end{equation}
where the implied constant depends only on $\kappa,A_1$, and is independent of $\CA,y,z$.
\end{thm}
The quantity $\frac{\omega(d)}{d}X$ is considered as an approximation of the cardinality of $\CA_d$ when $d$ factorises into prime divisors in $\CP$, and
 $$\CR_\CA(d):=\#\CA_d-\frac{\omega(d)}{d}X$$
 is the remainder term. So Theorem \ref{thm:sieve} implies that $S(\CA,\CP,z)$ grows like $X\prod_{p\mid\CP(z)}\left(1-\frac{\omega(p)}{p}\right)$, provided that $\omega(\cdot)$ are $\CR_\CA(\cdot)$ are both small on average.
\subsection{The Lang-Weil estimate revisited}
Before proceeding to the proof of Theorem \ref{th:primepoint}, we recall a version of Lang-Weil estimate dealing with rational points on reductions of arbitrary varieties at finite places.

\begin{thm}[Lang-Weil]\label{co:Lang-Weil}
	Let $V$ be a variety over a number field $k$ and let $\CV$ be a model of $V$ over $\CO_{k,S}$ where $S\subset \Omega_k\setminus\infty_{k}$ is a finite set of places.  Then there exists a constant $C(\CV)>0$ depending only on $\CV$ such that for any prime ideal $\mathfrak{p}$ of $\CO_{K,S}$, 
	$$\#\CV(\BF_q)\leqslant C(\CV) q^{\dim V},$$
	where $\BF_q:=\CO_{k,S}/\mathfrak{p}$ is of cardinality $q$. If $\CV_{\BF_q}:=\CV\times_{\CO_{k,S}}\Spec(\BF_q)$ is geometrically irreducible, then $$\#\CV(\BF_q)=q^{\dim V}+O(q^{\dim V-\frac{1}{2}}),$$
	where the implied constant depends only on $\CV$.
\end{thm}
\begin{proof}
	We apply \cite[Thm.~7.7.1]{Poonen} with $X=\CV,Y=\Spec(\CO_{k,S})$.
\end{proof}


\subsection{Proof of Theorem \ref{th:primepoint}}\label{se:proofprimepoint}

We keep using the notation in $\S$\ref{se:mainsieve}. Most of our argument follows \cite[\S4]{NS}.

 Upon multiplying $f$ by some integer with prime factors in $S$ or dividing $\alpha_0$, we may assume that 
\begin{equation}\label{eq:asskey}
f\in\BZ[\CG], \quad \alpha_0\mid N, \quad \text{ and }\quad \forall p\in S,p\mid N,
\end{equation}
(recall that $N:=\gcd(f(\Gamma_{\alpha_0}))$) without affecting the validity of \eqref{eq:loweralmostprime}.
For $D\in\BN_{\geqslant 1}$, let $\Gamma_{\alpha_0}[D]$ (resp. $\Gamma[D]$) be the reduction modulo $D$ of $\Gamma_{\alpha_0}$ (resp. $\Gamma$) in $\GL_n(\BZ/D\BZ)$, and let $$\Gamma_{\alpha_0}^f[D]:=\{x\in\Gamma_{\alpha_0}[D]:f(x)\equiv 0\mod D\}.$$
Let $\CP:=\{p:p\nmid N\}$. For $d\in\BN_{\geqslant 1}$, let us define the arithmetic function 
\begin{equation}\label{eq:rhof}
\varrho_f(d):=\begin{cases}
\displaystyle\frac{d \#\Gamma_{\alpha_0}^f[dN]}{\#\Gamma_{\alpha_0}[dN]} &\text{ if } \gcd(d,N)=1; \\ 0 &\text{ otherwise}.
\end{cases}
\end{equation}
Let us define the finite sequence of integers
$$\CA:=(N^{-1}|f(g)|)_{g\in\Gamma_{\alpha_0}:\|g\|<T}$$
indexed by elements in $\Gamma_{\alpha_0}$ of bounded height.
Then we recall from \eqref{eq:asympab} that 
\begin{equation}\label{eq:X}
X:=\#\CA=\#\{g\in\Gamma_{\alpha_0}:\|g\|<T\}\asymp T^a(\log T)^b.
\end{equation} 

 For the rest of the proof, we first show that $\varrho_f(\cdot)$ is multiplicative in $\S$\ref{se:rhomult}, and verify in $\S$\ref{se:sievecond} that $\varrho_f(\cdot)$ satisfies the conditions (1) (2) in the statement of Theorem \ref{thm:sieve}. We then give uniform estimates for the error term $\CR_\CA(\cdot)$ in \S\ref{se:univerror}, and finally apply Theorem \ref{thm:sieve} to $\CA$ in $\S$\ref{se:applysieve}.

\subsubsection{Multiplicativity}\label{se:rhomult}
By assumption that $G(\BR)$ has no compact $\BQ$-factor, Theorem \ref{rmk:KneserPlatonov} implies that $G$ satisfies strong approximation off $\BR$, so $\Gamma$ is dense in $\prod_{v\in\Omega_{\BQ}\setminus\infty_{\BQ}}\CG(\BZ_v)$. 
In particular for any $d_1,d_2\in\BN_{\geqslant 1}$ with $\gcd(d_1,d_2)=1$, the reduction map $\Gamma\to\Gamma[d_1]\times\Gamma[d_2]$ is surjective. This shows that $\Gamma_{\alpha_0}[d]=\Gamma[d]$ if $\gcd(d,\alpha_0)=1$ (by taking $d_1=\alpha_0,d_2=d$, and note that $\Gamma_{\alpha_0}=\ker(\Gamma\to\Gamma[\alpha_0])$), and so
 $\Gamma_{\alpha_0}\subset\Gamma$ is dense in $\prod_{\substack{v\in\Omega_{\BQ}\setminus\infty_{\BQ},v\nmid \alpha_0}}\CG(\BZ_v)$.
 
It follows that, if $d=d_1d_2\in\BN_{\geqslant 1}$ with $\gcd(d,N)=\gcd(d_1,d_2)=1$, thanks to \eqref{eq:asskey}, the reduction map $\Gamma_{\alpha_0}\to \Gamma_{\alpha_0}[d]\times \Gamma_{\alpha_0}[N]$ is also surjective, and hence
\begin{equation}\label{eq:rhomult1}
\Gamma_{\alpha_0}[dN]\simeq \Gamma_{\alpha_0}[d_1]\times  \Gamma_{\alpha_0}[d_2]\times \Gamma_{\alpha_0}[N]\subset \GL_n(\BZ/d_1\BZ)\times \GL_n(\BZ/d_2\BZ)\times\GL_n(\BZ/N\BZ).
\end{equation}
As in the proof of Corollary \ref{cor:primepoint}, we fix a lift $f_1\in \BZ[\BA^{n^2}]$ of $f$.
Then for any $D\in\BN_{\geqslant 1}$ and any $x\in \Gamma_{\alpha_0}[D]$, we have $x\in\Gamma_{\alpha_0}^f[D]$ if and only if $f_1(x)\equiv 0\mod D$. So by the Chinese Remainder Theorem,
\begin{equation}\label{eq:rhomult2}
\Gamma_{\alpha_0}^f[d]\simeq\Gamma_{\alpha_0}^f[d_1]\times  \Gamma_{\alpha_0}^f[d_2]\times \Gamma_{\alpha_0}^f[N].
\end{equation} 
Moreover \begin{equation}\label{eq:rhomult3}
\Gamma_{\alpha_0}[N]=\Gamma_{\alpha_0}^f[N]
\end{equation} in this affine setting because $f$ has common divisor $N$ on $\Gamma_{\alpha_0}$.
The equalities \eqref{eq:rhomult1} \eqref{eq:rhomult2} \eqref{eq:rhomult3} above show that the arithmetic function $\varrho_f$ \eqref{eq:rhof} is multiplicative. (See also \cite[Prop.~4.1]{NS}.) 

\subsubsection{Verifying the sieve conditions}\label{se:sievecond}
Since the function $f/N$ takes integer values on $\Gamma_{\alpha_0}$ and satisfies $\gcd((f/N)(\Gamma_{\alpha_0}))=1$, we always have $\Gamma_{\alpha_0}[pN]\setminus\Gamma_{\alpha_0}^f[pN]\neq\varnothing$ for any prime $p\in\CP$. And we have $\varrho_f(p)=0$ whenever $p\not\in\CP$. Therefore $$0\leqslant\frac{\varrho_f(p)}{p}<1$$ for any prime $p$, so the sieve condition (1) in Theorem \ref{thm:sieve} holds.

Consider the closed subvariety $$V:=(f=0)\cap G.$$ We have $\dim V=\dim G-1$ by assumption that $f$ does not vanish identically on $G$.
Let $\CV$ be integral closure of $V$ in $\CG$. Let $S^\prime$ be a finite set of primes depending on $\CG$ and $N$ such that, $p\mid N\Rightarrow p\in S^\prime$ and $\CG_{\BF_p}:=\CG\times_{\BZ}\Spec(\BF_p)$ is smooth and geometrically irreducible for any $p\notin S^\prime$. In particular for any such $p$ we have $\Gamma_{\alpha_0}[p]=\Gamma[p]\simeq \CG(\BF_p)$ by strong approximation and Hensel's lemma.
Then thanks to the multiplicativity of $\varrho_f$ and applying the Lang-Weil estimate (Theorem \ref{co:Lang-Weil}) to $\CV$ and $\CG$, we obtain that, uniformly for any $p\notin S^\prime$,
\begin{equation}\label{eq:rhoCV}
\frac{\varrho_f(p)}{p}=\frac{\#\Gamma^f_{\alpha_0}[pN]}{\#\Gamma_{\alpha_0}[pN]}=\frac{\#\Gamma^f_{\alpha_0}[p]}{\#\Gamma_{\alpha_0}[p]}=\frac{\#\CV(\BF_p)}{\#\CG(\BF_p)}\leqslant \frac{\CC}{p},
\end{equation}
where the constant $\CC>0$ depends only on $\CV,\CG$.
Therefore for any real numbers $w_1,w_2$ verifying $\max(\CC,2,\max_{p\in S^\prime}p)< w_1\leqslant w_2$, there exists $A_0>0$ such that 
\begin{align*}
	\prod_{\substack{w_1\leqslant p\leqslant w_2}}\left(1-\frac{\varrho_f(p)}{p}\right)^{-1}\leqslant 	\prod_{\substack{w_1\leqslant p\leqslant w_2}}\left(1-\frac{\CC}{p}\right)^{-1}\sim A_0\left(\frac{\log w_2}{\log w_1}\right)^{\CC},
\end{align*}
by Mertens' formula (cf.\cite[(2.20)--(2.21)]{Friedlander-Iwaniec} or \cite[\S1.6~Thm.~7~\&~10]{Tenenbaum}).
Thus it suffices to choose $A_1=A_1(\CV,\CG,\CP)> \max(A_0,1)$ so that the sieve condition (2) in Theorem \ref{thm:sieve} is satisfied with $\kappa=\CC$ for any $2\leqslant w_1\leqslant w_2$, taking into account of the finitely many primes which are $\in S^\prime$ or $\leqslant \CC$.

\subsubsection{Uniform error term estimates}\label{se:univerror}
For any $d\in\BN_{\geqslant 1}$ with $\gcd(d,N)=1$, we now estimate the cardinality of subsequence $$\CA_d:=(N^{-1}|f(g)|)_{\substack{g\in\Gamma_{\alpha_0}:\|g\|<T,d\mid N^{-1}f(g)}},$$
and compare it with $\frac{\varrho_f(d)}{d}X$.

A key ingredient is \cite[Theorem 3.2]{NS}, which states that for any $\varepsilon>0$, uniformly for any $\xi\in\Gamma$ and any principal congruence subgroup $\Gamma_{\alpha}$,
\begin{equation}\label{eq:erroruniformestimate}
\#\{g\in\Gamma_{\alpha}:\|\xi\cdot g\|< T\}=\frac{\vol\{g\in\Gamma:\|g\|< T\}}{[\Gamma:\Gamma_{\alpha}]}+O_\varepsilon(T^{a-\frac{\theta}{(1+\dim G)}+\varepsilon}),
\end{equation}
where the constant $a$ is in \eqref{eq:asympab}, $\theta$ is given in \cite[Theorem 3.2]{NS}, and $\vol(\cdot)$ is associated to certain normalised Haar measure on $G(\BR)$. The error term above is uniform with respect to $\alpha$, and depends only on the height function $\|\cdot\|$.
This turns out to be crucial for the treatment of the error term in \eqref{eq:sifting}.

Thanks to \eqref{eq:asskey}, the reduction $\Gamma_{\alpha_0}\to \Gamma_{\alpha_0}[dN]$ has kernel $\Gamma_{dN}$. 
On applying \eqref{eq:erroruniformestimate} to both $\Gamma_{\alpha_0}$ and $\Gamma_{dN}$, we get that, uniformly for any $\xi\in\Gamma$,
\begin{align*}
	&\#\{g\in\Gamma_{dN}:\|\xi\cdot g\|<T\}\\=&\frac{\vol\{g\in\Gamma:\|g\|<T\}}{[\Gamma:\Gamma_{dN}]}+O_\varepsilon(T^{a-\frac{\theta}{(1+\dim G)}+\varepsilon})\\ =&\frac{[\Gamma:\Gamma_{\alpha_0}]}{[\Gamma:\Gamma_{dN}]}\left(\#\{g\in\Gamma_{\alpha_0}:\|g\|<T\}+O_\varepsilon(T^{a-\frac{\theta}{(1+\dim G)}+\varepsilon})\right)+O_\varepsilon(T^{a-\frac{\theta}{(1+\dim G)}+\varepsilon})\\ =&\frac{\#\{g\in\Gamma_{\alpha_0}:\|g\|<T\}}{[\Gamma_{\alpha_0}:\Gamma_{dN}]}+O_\varepsilon(T^{a-\frac{\theta}{(1+\dim G)}+\varepsilon}).
\end{align*}
Let us choose representatives $\xi_1,\cdots,\xi_{\#\Gamma_{\alpha_0}[dN]}\in\Gamma_{\alpha_0}$ of the group $\Gamma_{\alpha_0}/\Gamma_{dN}\simeq \Gamma_{\alpha_0}[dN]$,
and write $\overline{\xi_i}\in\Gamma_{\alpha_0}$ the image of $\xi_i$ modulo $dN$.
Since in this affine setting, the function $f$ takes constant residue modulo $dN$ on any coset $\xi_i\Gamma_{dN}$, 
we henceforth get

\begin{align*}
	\#\CA_d&=\#\{g\in\Gamma_{\alpha_0}:\|g\|<T,f(g)\equiv 0\mod dN\}\\
	&=\sum_{i=1}^{\#\Gamma_{\alpha_0}[dN]}\mathbf{1}_{dN
		\mid f(\overline{\xi_i})}\#\{g\in\Gamma_{dN}:\|\xi_i\cdot g\|<T\}\\
	&=\sum_{i=1}^{\#\Gamma_{\alpha_0}[dN]}\mathbf{1}_{dN
		\mid f(\overline{\xi_i})}\left(\frac{\#\{g\in\Gamma_{\alpha_0}:\|g\|<T\}}{[\Gamma_{\alpha_0}:\Gamma_{dN}]}+O_\varepsilon(T^{a-\frac{\theta}{(1+\dim G)}+\varepsilon})\right)\\
	&=\#\Gamma_{\alpha_0}^f[dN]\left(\frac{\#\{g\in\Gamma_{\alpha_0}:\|g\|<T\}}{[\Gamma_{\alpha_0}:\Gamma_{dN}]}+O_\varepsilon(T^{a-\frac{\theta}{(1+\dim G)}+\varepsilon})\right)\\
	&=\frac{\#\Gamma_{\alpha_0}^f[dN]}{\#\Gamma_{\alpha_0}[dN]}X+O_\varepsilon(\#\Gamma_{\alpha_0}^f[dN]T^{a-\frac{\theta}{(1+\dim G)}+\varepsilon}),
\end{align*}
where we recall $X$ \eqref{eq:X} and the implied constants in all error terms are independent of $d$. Therefore
\begin{equation}\label{eq:crca}
\begin{split}
\CR_\CA(d)&:=\#\CA_d-\frac{\varrho_f(d)}{d}X\\ &=O_\varepsilon(\#\Gamma_{\alpha_0}^f[dN]T^{a-\frac{\theta}{(1+\dim G)}+\varepsilon})=O_\varepsilon(\#\Gamma_{\alpha_0}^f[dN]X^{1-\frac{\theta}{a(1+\dim G)}+\varepsilon}).
\end{split}
\end{equation}

We now estimate $\#\Gamma_{\alpha_0}^f[dN]$ for $d\in\BN_{\geqslant 1}$ square-free with $\gcd(d,N)=1$. Thanks to \eqref{eq:rhomult2}, by employing the Lang-Weil estimate (Theorem \ref{co:Lang-Weil}), we obtain that, for any $\varepsilon>0$,

\begin{equation}\label{eq:upper}
\begin{split}
\#\Gamma_{\alpha_0}^f[dN]&\leqslant \#\GL_{n}(\BZ/N\BZ)\times \prod_{p\in S^\prime\cap \CP}\#\GL_n(\BF_p)\times \prod_{\substack{p\mid d,p\notin S^\prime}}\#\CV(\BF_p)\\ &\ll \prod_{\substack{p\mid d,p\notin S^\prime }} C(\CV)p^{\dim G-1} \ll_{\varepsilon} d^{\dim G-1+\varepsilon},
\end{split}
\end{equation}
 so that we can continue to bound \eqref{eq:crca} as
\begin{align*}
\CR_\CA(d)\ll_{\varepsilon} d^{\dim G-1+\varepsilon}X^{1-\frac{\theta}{a(1+\dim G)}+\varepsilon}.
\end{align*}

Therefore, with $y:=X^{\sigma}$, where $\sigma>0$ is chosen depending only $G$ such that $\gamma:=\frac{\theta}{a(1+\dim G)}-\sigma\dim G\in~]0,1[$, we get, uniformly for any $w\geqslant 2$,
$$\sum_{\substack{d\leqslant y\\d\mid \CP(w)}}|\CR_\CA(d)|\leqslant \sum_{\substack{d\leqslant y\\p\mid d\Rightarrow p\in\CP}}\mu^2(d)|\CR_\CA(d)|\ll_{\varepsilon} X^{1-\gamma+\varepsilon}.$$

\subsubsection{Applying the combinatorial sieve}\label{se:applysieve}
We deduce from Theorem \ref{thm:sieve} that, for the sequence $\CA$ with $T$ large enough so that $$z:=y^{\lambda}=X^{\lambda\sigma}\asymp T^{a\lambda\sigma}(\log T)^{b\lambda\sigma}>\max(\CC,2)$$ by \eqref{eq:X}, the sifting inequality \eqref{eq:sifting} now reads
\begin{align*}
S(\CA,\CP,z)&=\#\left\{g\in \Gamma_{\alpha_0}: \|g\|<T ,\gcd(f(g),\prod_{p<z,p\nmid  N}p)=1\right\}\\
&\gg X\prod_{p<z,p\nmid N}\left(1-\frac{\varrho_f(p)}{p}\right)\gg X\prod_{C(V)<p<z}\left(1-\frac{\CC}{p}\right)\\ &\asymp \frac{X}{(\log z)^{\CC}}\asymp \frac{\#\{g\in\CG(\BZ):\|g\|<T\}}{(\log T)^{\CC}},
\end{align*}
by \eqref{eq:rhoCV}, Mertens' formula and \eqref{eq:erroruniformestimate}. We conclude that \eqref{eq:loweralmostprime} holds for $\beta_1= a\lambda\sigma$ and $\beta_2=\CC$. This finishes the proof of Theorem \ref{th:primepoint}.
\qed

\begin{rem}
	A more sophisticated weighted sieve method (cf. e.g. \cite[Chap.~10]{HR}) should be able to enlarge $\beta_1$, and consequently give smaller admissible values for $r_0$ in Corollary \ref{cor:primepoint} (this is one of the ingredients for refinements in \cite[p. 398-400]{NS}). 
	On the other hand, on combining with the Chebotarev Density Theorem, one can obtain a lower bound for the product in Theorem \ref{thm:sieve} (2), and this results in an upper-bound for \eqref{eq:loweralmostprime} (with a smaller power on the log term). In \cite[\S2]{NS}, a stronger two-sided condition (e.g \cite[$\Omega_2(\kappa,L)$ p.~142]{HR}) is proven, under the extra assumption in Remark \ref{rmk:sieveremark} (ii), so that \eqref{eq:loweralmostprime} has expected magnitude of growth. As the weaker one-sided condition (2) is enough to execute the sieve and is sufficient to deduce Corollary \ref{cor:primepoint}, we do not seek to obtain best possible $r_0$ or to minimize the exponent $\beta_2$ on $\log T$ in our article.
\end{rem}

\section{Anisotropic case}\label{se:anisotropic}

The following is our main result of this section, whose proof shows how the fibration method and the affine linear sieve established in preceding sections match together.

\begin{thm}\label{th:mani1}
	Let $G$ be a three-dimensional semisimple simply connected linear algebraic group over a number field $k$ and $T\sbt G$ be a maximal torus.
	If $T(k_{\infty})$ is not compact, then $G$ satisfies \APSA off $\infty_{k}$.
\end{thm}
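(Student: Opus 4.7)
The plan is to combine the fibration/subgroup-action machinery from Section~\ref{se:isotropic} with the affine combinatorial sieve developed in Section~\ref{se:sieve}. Let $U=G\setminus D\sbt G$ with $\codim(D,G)\geq 2$, so that $\dim D\leq 1$, and fix a non-empty basic open neighbourhood $W=\prod_{v\in S}W_v\times\prod_{v\notin S\cup\infty_k}\CU(\CO_v)\sbt U(\RA_k^{\infty_k})$ with $S\sbt\Omega_k\setminus\infty_k$ finite and each $W_v$ non-empty open in $U(k_v)$; we must exhibit $g\in U(k)\cap W$. By Theorem~\ref{thm:isotropic} applied to an archimedean place $v_0\in\infty_k$, we may assume $G$ is $k$-anisotropic (otherwise \textbf{(APSA)} off $v_0$, and a fortiori off $\infty_k$, already holds). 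After enlarging $S$ we fix a smooth affine integral model $\CG\sbt\operatorname{GL}_{n,\CO_{k,S}}$ of $G$, the schematic closure $\CD\sbt\CG$ of $D$, and a separable regular function $f\in\CO_{k,S}[\CG]$ vanishing identically on $D$ but not on $G$ — such $f$ exists since $\dim D<\dim G$ — arranged moreover so that $f(\CG(\CO_v))\cap\CO_v^{\times}\neq\varnothing$ for every $v\notin S\cup\infty_k$. The crucial consequence is that for $v\notin S\cup\infty_k$, if $g\in\CG(\CO_v)$ satisfies $f(g)\in\CO_v^\times$ then necessarily $g\in\CU(\CO_v)$.

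We next run an induction on the codimension $c:=\codim(D,G)\geq 2$, the base case $c>\dim G=3$ being \textbf{(SA)} off $\infty_k$ for $G$ itself, which holds by Kneser--Platonov because $G(k_\infty)\supseteq T(k_\infty)$ is non-compact by hypothesis. Applying Lemma~\ref{isolem2} to the non-trivial subgroup $T\sbt G$, after replacing $T$ by a suitable $k$-conjugate if necessary, yields a closed $D'\sbt D$ with $\dim D'<\dim D$, $T\cdot D'=D'$ and $T\cdot(G\setminus D)=G\setminus D'$. Writing $\pi\colon G\to Y=T\backslash G$ and $U_1:=G\setminus D'$, the restriction $\pi_{U_1}\colon U_1\to V_1:=Y\setminus\pi(D')$ is a $T$-torsor, and by the inductive hypothesis $U_1$ satisfies \textbf{(SA)} off $\infty_k$. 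Mimicking the final part of the proof of Theorem~\ref{thm:reduction}, using \cite[Prop.~3.5]{CLX} to pass to a Hilbert-type open $V_0\sbt V_1$ and \cite[Prop.~2.3(2)]{CLX}, we produce a rational point $g_0\in U_1(k)$ satisfying $g_0\in W_v$ for every $v\in S$ and $g_0\in\CG(\CO_v)$ for every $v\notin S\cup\infty_k$ (further enlarging $S$ to absorb denominators of $g_0$ if needed), projecting into $V_0$ via $\pi$. What remains is to modify $g_0$ so that it lies in $\CU(\CO_v)$ — and not merely $\CG(\CO_v)$ — at every $v\notin S\cup\infty_k$.

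This modification is carried out via the sieve. Apply Corollary~\ref{cor:primepoint} to $\CG$, to the subgroup $\Phi=\prod_{v\in S}\Phi_v\times\prod_{v\notin S\cup\infty_k}\CG(\CO_v)$ with each $\Phi_v\sbt G(k_v)$ a small enough open compact such that $g_0\Phi_v\sbt W_v$, and to the translated regular function $h\mapsto f(g_0h)$ (which inherits separability and the unit-value condition from $f$). This produces an element $h\in G(k)\cap\Phi$ for which $f(g_0h)$ has at most $r$ prime factors in $\CO_{k,S}$, each of residue field cardinality larger than any prescribed $M$. Setting $g:=g_0h$, we obtain $g\in W_v$ for $v\in S$ and $g\in\CU(\CO_v)$ for every $v\notin S\cup\infty_k$ outside the at most $r$ exceptional \emph{bad} primes dividing $f(g)$. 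The main obstacle — which we expect to be the technical heart of Section~\ref{se:anisotropic} — is to clear these remaining bad primes. The key geometric point is that $\{f=0\}\sbt G$ is a hypersurface strictly containing the codimension-$\geq 2$ subscheme $\CD$, so a Lang--Weil estimate (cf.\ Corollary~\ref{co:Lang-Weil}) shows that the reductions of $\CD$ inside $\{f=0\}$ are of negligible size once the residue field has cardinality $>M$. Exploiting this together with the $T$-equivariance from the fibration and a further controlled use of the $T$-action — adjusting $g$ by an element $t\in T(k)$ congruent to $1$ at places in $S$ and chosen to move the residues at the $r$ bad primes off $\CD\bmod v$, via weak-approximation-type arguments on the $T$-orbit possibly combined with another instance of the sieve — one arrives at a rational point $gt\in U(k)\cap W$, completing the proof.
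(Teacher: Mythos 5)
Your setup (reduction to the anisotropic case, integral models, a separable $f$ detecting $D$ with unit values at almost all places, and Corollary \ref{cor:primepoint} to produce a point whose only bad reductions occur at a bounded number $r$ of places with large residue fields) follows the paper's Steps I--IV in spirit. But the final step --- clearing the remaining bad primes --- is exactly the technical heart of the paper's argument, and your sketch of it has a genuine gap that the rest of the proposal does not repair. First, your sieve function is an \emph{arbitrary} regular function vanishing on $D$; when you then ``adjust $g$ by an element $t\in T(k)$'', the value $f(gt)$ bears no relation to $f(g)$, so new bad places can appear anywhere outside your controlled set, and the statement ``$gt\bmod v\notin\CD$ for all $v\notin S\cup\infty_k$'' cannot be concluded. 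The paper avoids this precisely by choosing the sieve function to be pulled back from the quotient: $f(g)=F\circ\pi(g\cdot P)$ with $\pi\colon G\to T\backslash G$ and $V(F)\supseteq\overline{\pi(D)}$, so that the subsequent correction is a \emph{left} translation by a torus element, which stays in the same fibre of $\pi$ and therefore leaves the set of bad places unchanged (note also that your right translation $g\mapsto gt$ does not even preserve the fibres of $T\backslash G$). Second, ``weak-approximation-type arguments on the $T$-orbit'' are not available: $T$ is anisotropic and the relevant group of integral torus points is a finitely generated group whose reductions at a few prescribed primes you cannot steer by density. The paper's substitute is quantitative: anisotropy plus non-compactness of $T(k_\infty)$ give an infinite group $\CT(\CO_k)$ (\cite[\S4.5, Cor.~1]{PR}), one fixes $Q$ of infinite order in it, bounds uniformly $\deg\CD_y\leq N$ for the zero-dimensional fibres $\CD_y$ (upper semi-continuity, not Lang--Weil), and chooses $M_r$ so that $\operatorname{ord}(Q\bmod v)>rN+1$ at every place with residue field larger than $M_r$; a pigeonhole over the orbit $\{Q^lP'\}_{0\leq l\leq r_0N}$ inside the single fibre through $P'$ then produces a point avoiding $\CD$ modulo all bad places simultaneously, while the invariance of $\pi(P')$ guarantees no new bad places. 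Without this $\pi$-compatible choice of $f$ and the order-of-$Q$ pigeonhole, your concluding sentence is an assertion rather than a proof.

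A minor additional remark: your induction on the codimension is superfluous in this $3$-dimensional situation --- applying Lemma \ref{isolem2} with the $1$-dimensional torus $T$ forces $D'=\varnothing$ outright (a $T$-stable set of dimension $<1$ must be empty), which is how the paper immediately ensures that no fibre of $\pi$ is contained in $D$ and that all fibres meet $D$ in dimension $0$; this fact is what makes the uniform bound $N$ and the whole Step~V mechanism possible.
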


\subsection{Overview of the proof}\label{se:overview41}
	The isotropic case is already covered by Theorem \ref{thm:isotropic}, so we only need to consider the case where $G$ is \emph{anisotropic} in what follows.
Let $D\sbt G$ be closed of codimension at least $2$ and let $U:=G\setminus D$. 
The \textbf{(ASPA)} off $\infty_{k}$ for $G$ amounts to saying $U$ satisfies \SA off $\infty_{k}$ by Definition \ref{def:sa}.
In \textbf{Step I}, by using the results in Section \ref{se:isotropic}, we can choose a torus $T$ such that $D$ intersects transversally with any fibre of the quotient $\pi:G\to G/T$. Here we make essential use of the assumption $\dim G=3$. 
The utilization of $\pi$ allows us to construct ``good'' integral models (properties (i)--(v) in \textbf{Step II}) and control uniformly the fibres intersecting $D$, most of them being zero-dimensional.
In \textbf{Step III}, we first use the \SA property off $\infty_{k}$ for $G$ to produce rational points ($P$ in \textbf{Step IV}) in any open subset ($W_1$ \eqref{eq:W1} in the proof) of $G(\RA_{k}^{\infty_{k}})$. The goal is to find points avoiding $D$ modulo almost all places to ensure that they are in $U(\RA_{k}^\infty)$ (the open set $W$ \eqref{eq:W} in the proof). In \textbf{Step IV}, we apply the sieve results in Section \ref{se:sieve} to produce a new point $P^\prime$ in $W_1$,
so that the regular function $f$ (defined by \eqref{eq:f}) vanishing on $\pi^{-1}(\pi(D))$, when evaluated at $P^\prime$, is almost-prime and has only sufficiently big prime divisors (lying in $S_0$ \eqref{eq:S0} in the proof).
As the torus $T$ contains integral points of infinite order thanks to its non-compactness at the archimedean places, and the function $f$ is chosen to be compatible with the fibration $\pi$, in \textbf{Step V}, 
we make use of the action of $T$ on the fibre containing $P^\prime$ without changing the value of $f$ to find a new point $P^{\dprime}$ avoiding $D$ modulo all places in $S_0$. This point automatically avoids $D$ modulo any other places not in $S_0$, thereby achieving our goal.


\subsection{A lemma on actions of topological groups}
We include the following elementary result for the sake of completeness.
\begin{lem}\label{le:topogrp}
Let $H$ be a topological group which acts continuously on a topological space $Y$.
Then, for any compact open subset $V\subset Y$, there exists an open subgroup $F$ of $H$ such that $F\cdot V=V$.
\end{lem}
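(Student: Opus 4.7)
The plan is to exploit the continuity of the action near the identity together with the compactness of $V$ to produce a symmetric open neighborhood $U\ni e$ that stabilizes $V$, and then to take the open subgroup generated by $U$.

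First I would use continuity of the action map $\mu\colon H\times Y\to Y$ together with the openness of $V$ to produce, for every $v\in V$, open neighborhoods $U_v$ of $e\in H$ and $W_v$ of $v$ in $Y$ with $U_v\cdot W_v\subset V$ (this is simply unpacking the fact that $\mu^{-1}(V)$ is open and contains $(e,v)$). The $\{W_v\}_{v\in V}$ form an open cover of the compact set $V$; extracting a finite subcover $W_{v_1},\dots,W_{v_n}$ and setting $U_0:=\bigcap_{i=1}^n U_{v_i}$, I would obtain an open neighborhood $U_0$ of $e$ in $H$ with $U_0\cdot V\subset V$.

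Next, I would replace $U_0$ by the symmetric open neighborhood $U:=U_0\cap U_0^{-1}$ of $e$, which still satisfies $U\cdot V\subset V$, and I would let $F$ be the subgroup of $H$ generated by $U$. Since $F$ contains the open set $U\ni e$, it is a union of left translates of $U$ and hence open in $H$. Finally, writing an arbitrary $f\in F$ as $f=u_1\cdots u_m$ with $u_i\in U$, a short induction on $m$ using $U\cdot V\subset V$ gives $f\cdot V\subset V$, so $F\cdot V\subset V$; combined with $e\in F$, this yields $F\cdot V=V$.

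There is no real obstacle here: the only subtlety worth flagging is that one must symmetrize $U_0$ before generating the subgroup, so that the inductive argument applies to products of length $m$ regardless of which factors come from $U$ versus $U^{-1}$. The openness of $V$ is used exactly once, to ensure that $\mu^{-1}(V)$ is open so that product neighborhoods of $(e,v)$ can be found inside it.
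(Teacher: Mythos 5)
Your proof is correct and follows essentially the same route as the paper's: continuity of the action at $(e,v)$ for each $v\in V$, compactness of $V$ to pass to a finite subcover, intersection of the resulting identity neighborhoods, and then the open subgroup they generate. The only difference is that you spell out the symmetrization of $U_0$ and the induction on word length, details the paper's terser argument leaves implicit.
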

\begin{proof}
 We may assume that $V\neq\varnothing$, otherwise we take $F$ to be the identity. By the continuity assumption, for any $y\in V$, there exists an open subset $V_y\subset V$ of $Y$ and an open subset $H_y\subset H$ containing the identity such that $H_y\cdot V_y\subset V_y$. Since $V$ is compact, we can choose points $y_1,\cdots,y_n\in V$ such that $V=\cup_{i=1}^n V_{y_i}$. Then the open subgroup $F$ generated by $\cap_{i=1}^n H_{y_i}$ satisfies that $F\cdot V=V$.
\end{proof}

\subsection{Proof of Theorem \ref{th:mani1}}
Recall that our goal is to show that $U:=G\setminus D$ satisfies \SA off $\infty_{k}$, whenever $\codim(D,G)$ is at least $2$, under the assumption that $G$ is $k$-anisotropic. 

	\textbf{Step I: Setting up the fibration.}
	
 The assumption $\dim(G)=3$ implies that the group $G$ is $k$-simple (by considering the decomposition of $G$ into $k$-simple factors and by using the fact that there is no one-dimensional semisimple group). Moreover, the Bruhat decomposition implies that the big cell of $G_{\bar{k}}$ is of the form $U_+\times T_{\bar{k}}\times U_-$ where $U_+$ (resp. $U_-$) is the unipotent radical of some Borel subgroup (resp. opposite Borel subgroup) of $G_{\bar{k}}$ (cf. \cite[Cor. 14.14]{Bo}). So $\codim(T,G)$ is even and $\dim(T)=1$. 
 

Let us consider the quotient map $\pi: G\to Y:=G/T$. By Lemma \ref{isolem2}, upon replacing $T$ by another $k$-conjugate if necessary, we may assume there exists a closed subset $D'\sbt D$ such that
$T\cdot D'=D'$, $\dim(D')<\dim(D)$ and $\pi(G\setminus D)=Y\setminus \pi(D')$.
Since $\dim(G)=3$, one has $\dim(D')<\dim (D)\leqslant 1$ and so $D'=\varnothing$ because $T\cdot D'=D'$. 
As $\pi$ is a $T$-torsor, its restriction $\pi_U:=\pi|_U: U\to Y$ is smooth surjective with geometrically integral fibres. Thus $D$ does not contain any fibre of $\pi$ and, for any $y\in Y$, the fibre $D_y:=\pi^{-1}(y)\cap D$ is either empty or of $\dim(D_y)=0$.

Since $Y$ is affine (by Matsushima's criterion since $T$, $G$ are reductive, cf. \cite[\S4~Thm.~4.17]{PV}), and $\dim(\overline{\pi(D)})\leqslant \dim(D)<\dim Y=2$, we can find a regular function $F\in k[Y]$ so that its zero locus $E$ in $Y$ contains $\overline{\pi(D)}$. Let $V_0$ be the non-empty open subset $Y\setminus E$, and we have $\pi^{-1}(V_0)\sbt U$.

\textbf{Step II: Choosing integral models and locating ``bad'' fibres.}

We now fix integral models of $G,T$ and $U$ as follows.
We choose an embedding $G\sbt \GL_{n,k}$, and 
let $\CG$ be the integral closure of $G$ in $\GL_{n,\CO_k}$, $\CT$ (resp.~$\CD$) be the integral closure of $T$ (resp.~$D$) in $\CG$, and $\CU:=\CG\setminus \CD$.
Then $\CG$, $\CT$ are flat over $\CO_k$, and the quotient $$\pi: \CG\to \CY:=\CG/\CT$$ exists as a scheme over $\CO_k$ by a theorem of Raynaud (cf. \cite[\S4]{Ana}). Let $\CE$ be the integral closure of $E$ in $\CY$ and $\CV_0:=\CY\setminus \CE$.

To prove that $U$ satisfies $\SA$ off $\infty_{k}$, we need to show that, for $W\sbt U(\RA_k^{\infty_{k}})$ any non-empty open subset,
\begin{equation}\label{eq:WU}
	W\cap U(k)\neq\varnothing.
\end{equation}
First of all, there exists a finite set of places $S\sbt (\Omega_k\setminus \infty_{k})$ such that:

(i) by shrinking $W$ if necessary, we may assume 
\begin{equation}\label{eq:W}
W=\prod_{v\in S}W_v\times \prod_{v\notin S\cup \infty_{k}}\CU(\CO_v),
\end{equation}
with $W_v\sbt U(k_v)$ a non-empty compact open subset for each $v\in S$;

(ii) $\CO_{k,S}$ is a principal ideal domain and there exists a finite subset $S'\sbt \Omega_{\BQ}$ such that $\CO_{k,S}$ is finite étale over $\BZ_{S'}$;

(iii) the regular function $F$ defined at the end of \textbf{Step I} satisfies $F\in \CO_{k,S}[\CY]$; 

(iv)  for all $v\notin S\cup \infty_{k}$, the map $\pi|_{\CU}: \CU(\CO_v)\to \CY(\CO_v)$ is surjective, and $\CV_0(\CO_v)\neq\varnothing$ (by \cite[(ii) Proof of Thm. 4.5]{Conrad});

(v) there exists an integer $L$ such that, for any $y\in \CY\times_{\CO_k}\CO_{k,S}$, the fibre $\CD_y:=\CD\cap\pi^{-1}(y)$ is either empty, or has dimension $0$ (cf. \cite[Ex. II. 3.22]{Har}) and satisfies 
\begin{equation}\label{eq:N}
\# \CD_y(\overline{k(y)})\leqslant \deg (\CD_y)\leqslant L,
\end{equation}
thanks to the upper semi-continuity of the function (cf. \cite[Ex. II. 5.8]{Har})
$$y\mapsto \deg(\CD_y):=\dim_{k(y)}((\pi|_{\CD})_*(\CO_{\CD})\otimes_{\CO_{\CY}}k(y)).$$

We summarise the construction above to the following commutative diagram:
\[\xymatrix{\pi^{-1}(\CV_0)\ar@{^{(}->}[r]\ar[d] &\CU\ar@{^{(}->}[r]\ar[rd]_{\pi_U} & \CG\ar[d]^{\pi} & \CD\ar[d]\ar@{_{(}->}[l]\\
	\CV_0\ar@{^{(}->}[rr]&&\CY& \CE.\ar@{_{(}->}[l]
}\]
\textbf{Step III: First torus action.}

Recall the set $W$ \eqref{eq:W}. Let us consider
\begin{equation}\label{eq:W1}
W_1:= \prod_{v\in S}W_v\times \prod_{v\notin S\cup \infty_{k}}\CG(\CO_v)\subset G(\RA_{k}^{\infty_{k}}).
\end{equation}
By Lemma \ref{le:topogrp}, there exists, for each $v\in S$, an open compact subgroup $\Phi_v\sbt G(k_v)$ such that $\Phi_v\cdot W_v=W_v$. 
Consider subgroups
$$\Phi_G:=\prod_{v\in S}\Phi_v\times \prod_{v\notin S\cup \infty_{k}}\CG(\CO_v)\subset G(\RA_{k}^{\infty_{k}}),$$  and   $$\Phi_T:=\prod_{v\in S}(\Phi_v\cap\CT(\CO_v))\times \prod_{v\notin S\cup \infty_{k}} \CT(\CO_v)\subset \prod_{v\notin \infty_{k}}\CT(\CO_v).$$
We then have 
\begin{equation}\label{eq:W1fix}
\Phi_G\cdot W_1=W_1 \quad\text{and}\quad \Phi_T\cdot W_1=W_1.
\end{equation} 
By assumption, $G$ is anisotropic and $T(k_{\infty})$ is not compact, so the group $\CT(\CO_k)$ is infinite (cf. \cite[\S4.5~Cor.~1]{PR}).
Since $\Phi_T$ has finite index in $\prod_{v\notin \infty_{k}}\CT(\CO_v)$, the group $\Phi_T(\CO_k):=\CT(\CO_k)\cap \Phi_T$ has finite index in $\CT(\CO_k)$ and hence is infinite. Let $1_{\CT}$ be the identity and fix \begin{equation}\label{eq:Q}
Q\in \Phi_T(\CO_k)
\end{equation} an element of infinite order. For any $v\in \Omega_k\setminus \infty_{k}$, we denote by $\ord(Q\mod v)$ the order of the element $Q \mod v$ in the group $\CT(k(v))$. We want to show:

(vi) for any integer $r\in\BN_{\geqslant 1}$, there exists $M_r>0$ such that, for any finite place $v$ with $\#k(v)>M_r$, one has that $\ord(Q\mod v)\geqslant rL+1$ (recall $L$ \eqref{eq:N}).\footnote{The reason of choosing these numbers will be clear in \textbf{Step V}.}

Indeed, for any integer $l\in \BN_{\geqslant 1}$, consider the set
$$B(Q,l):=\{v\in \Omega_k\setminus \infty_{k}:\ord(Q\mod v)\leqslant l\}.$$ Then it is clear that $B(Q,l)=\cup_{1\leqslant j\leqslant l}B(Q^j)$, where each set $$B(Q^j):=\{v\in \Omega_k\setminus \infty_{k}:\ Q^j\equiv 1_{\CT}\mod v\}$$ has finite cardinality. Now to see the statement (vi) it suffices to take $l=rL$ and $M_r:=\max_{v\in B(Q^j),1\leqslant j\leqslant l}(\#k(v))$, so that for any $v\in\Omega_{k}\setminus\infty_{k}$ such that $\#k(v)>M_r$, we have $v\in\Omega_{k}\setminus(\infty_{k}\cup B(Q,rL))$, in other words, $\ord(Q\mod v)\geqslant rL+1$. This proves (vi).

\textbf{Step IV: Applying the affine sieve.}

Since $G(k_\infty)$ is not compact (because of $T(k_\infty)$), $G$ satisfies \SA off $\infty_{k}$ by Theorem \ref{rmk:KneserPlatonov}. Hence there exists 
$P\in G(k)\cap W_1$ (recall $W_1$ \eqref{eq:W1}). 
Let us define the regular function
\begin{equation}\label{eq:f}
f(g):=(F\circ \pi) (g\cdot P).
\end{equation}
Then $f\in \CO_{k,S}[\CG]$ by (iii). We claim that 
$f(\CG(\CO_v))\cap \CO_v^{\times}\neq \varnothing$ for any $v\notin S\cup \infty_{k}$. Indeed, according to (iv), we have $\pi(\CU(\CO_v))\cap \CV_0(\CO_v)\neq\varnothing$. Take any $P_0 \in\CU(\CO_v)$ such that $\pi(P_0)\in \CV_0(\CO_v)$, then the element $g_1:=P_0\cdot P^{-1}\in\CG(\CO_v)$ verifies $f(g_1)=F(\pi(P_0))\not\equiv 0\mod v$ by the definition of $\CV_0$. This proves the claim.
Taking (ii) into account, the hypotheses of Corollary \ref{cor:primepoint} are satisfied for the pair $(f,\Phi_G)$. We thus obtain:

(vii) there exists an integer $r_0$ and an element $g_0\in \Phi\cap \CG(\CO_{k,S})$, such that $f(g_0)$ has at most $r_0$ prime factors in $\CO_{k,S}$ and all of their residue fields have cardinalities larger than $M_{r_0}$, where the constant $M_{r_0}$ is defined by (vi) in \textbf{Step III} by taking $r=r_0$.

\textbf{Step V: Second torus action -- avoidance of ``bad'' fibres.}

Recall \eqref{eq:Q}. Let us define $P^\prime:=g_0\cdot P$, 
$$\Theta:=\{Q^l\cdot P^\prime:0\leqslant l\leqslant r_0\cdot L\}$$ and
\begin{equation}\label{eq:S0}
S_0:=\{v\in \Omega_{k}\setminus (S\cup\infty_{k}):\  \pi(P')\mod v\in \CE\}.
\end{equation}
Then $P^\prime\in\Theta\subset G(k)\cap W_1$ thanks to \eqref{eq:W1fix}.
And (vii) implies that the set $S_0$ (if non-empty) contains at most $r_0$ places.  
	
	For any $v\in S_0$, let $y_v:=\pi(P') \mod v\in \CY(k(v))$ and $$\Theta_v:=\{x\in \Theta:\ x\mod v \in \CD_{y_v}\}.$$
	(Recall $\CD:=\CG\setminus\CU$ defined in the beginning of \textbf{Step II} and $\CD_y$ in (v).)
	According to (vi) and (vii), for any $v\in S_0$, we have $\#k(v)>M_{r_0}$ and hence $\ord(Q\mod v)\geqslant r_0L+1$. 
	So the reduction map $\Theta\xrightarrow{\mod v} \CG_{y_v}(k(v))$ is injective.
	Thus $\#\Theta_v\leqslant \#\CD_{y_v}(k(v))\leqslant\deg \CD_{y_v}$.
	On the other hand, on combining \eqref{eq:N} in (v), we get
	$$\# \Theta= r_0L+1\geqslant 1+\sum_{v\in S_0} \deg \CD_{y_v} \geqslant 1+\sum_{v\in S_0} \#\Theta_v> \sum_{v\in S_0} \#\Theta_v .$$
	This means that we can find a point $P^{\dprime}\in\Theta$ such that $P^{\dprime}\mod v\notin \CD$ for all $v\in S_0$.

By the definition of $S_0$, $\pi(P^\prime)=\pi(P^{\dprime})\mod v\not\in \CE$ for any $v\not\in S\cup S_0\cup \infty_{k}$, and therefore for any such $v$, we have $P^{\dprime}\mod v\notin \CD$.
We have finally proven that $P^{\dprime}\mod v\notin \CD$ for any $v\notin S\cup \infty_{k}$, and therefore $P^{\dprime}\in W\cap G(k)=W\cap U(k)$ (recall $W$ \eqref{eq:W}), so \eqref{eq:WU} is achieved. This finishes the proof of the theorem.
\qed

\section{Affine quadrics}\label{se:affquad}
In this section we complete the proof of Theorem \ref{thm:anisotropic} and then deduce Theorem \ref{cor:anisotropic}, finally confirming \APSA off $\infty_{k}$ for affine quadrics.
\subsection{Proof of Theorem \ref{thm:anisotropic}}
By Theorem \ref{thm:reduction}, it suffices to show that $G^\prime$ satisfies \APSA off $\infty_{k}$. Upon replacing $G$ by $G'$,  we may assume that $\dim(G)=3$.
Since $G(k_{\infty})$ is not compact, there exists $v\in \infty_{k}$ such that $G(k_v)$ is not compact.
By \cite[Thm. 3.1]{PR}, $G_{k_v}$ is isotropic, so it contains an isotropic maximal torus $T_v\sbt G_{k_v}$ over $k_v$.
By \cite[\S 7.1, Cor. 3]{PR}, there exists a maximal torus $T\sbt G$ over $k$ such that $T_{k_v}$ is conjugate to $T_v$ over $k_v$. 
Thus $T(k_{\infty})$ is not compact. 
Then the statement follows from Theorem \ref{th:mani1}.
\qed

\subsection{Proof of Theorem \ref{cor:anisotropic}}
To prove (i), we may assume $q(x_1,\cdots, x_n)=\sum_{i=1}^na_ix_i^2$ with $a_i\in k^{\times}$ for all $i$. Since $q$ is isotropic over some $v_0\in \infty_{k}$, if $v_0$ is real, then we may assume that $v_0(a_1\cdot a_2)< 0$.
Then the spin group 
$$G':=\operatorname{Spin}(a_1x_1^2+a_2x_2^2+a_3x_3^2)$$
is a three dimensional closed subgroup of $G=\operatorname{Spin}(q)$ and is not compact over $v_0$.
 If $G$ is $k$-simple, then the statement (i) now follows from Theorem \ref{thm:anisotropic}. The only case where $G$ is not $k$-simple (cf. \cite[Proof of Thm.~6.1]{CTX09}) is when $n=4$ and $\det(q)\in k^{\times 2}$, and in that case $G\simeq_k H\times H$, where $H$ is a $k$-form of $\SL_{2,k}$ and $H_K\simeq \SL_{2,K}$ over an extension $K$ of $k$ if $q$ is isotropic over $K$. Since we assume that $q$ is isotropic over the place $v_0$, 
so  $G_{k_{v_0}}\simeq_{k_{v_0}}\SL_{2,k_{v_0}}\times\SL_{2,k_{v_0}}$.
Therefore the three dimensional $k$-simple group $H$ is not compact over $v_0$. It then satisfies \APSA off $\infty_{k}$ by Theorem \ref{thm:anisotropic}. The fact that $G$ also verifies \APSA off $\infty_{k}$ in this case follows from \cite[Prop. 4.7]{CLX}.

To prove (ii), we may assume that the affine quadric $V:q(x_1,\cdots,x_n)=a_0$ satisfies $V(\RA_k)\neq\varnothing$. Then $V(k)\neq\varnothing$ by the Hasse–Minkowski theorem. Then the statement follows from \cite[Thm.~1.3]{CLX}, since by \cite[\S5.3]{CTX09}, when $n\geqslant 4$, $V$ is a homogeneous space under $\operatorname{Spin}(q)$ with stabilizer isomorphic to $\operatorname{Spin}(h)$ where $h$ is another non-degenerate quadratic form in $n-1$ variables, and we have 
$\overline{k}[V]^\times=\overline{k}^\times, \operatorname{Br}(V)=\operatorname{Br}(k)$.
\qed

\section*{Acknowledgements}
We would like to thank Jean-Louis Colliot-Thélène, Cyril Demarche, Philippe Gille, Yonatan Harpaz, Diego Izquierdo, Olivier Wittenberg and Fei Xu for many interesting discussions, and to Dasheng Wei and Han Wu for kindly pointing out some inaccuracies. We are grateful to Tim Browning for his interest in this project and to Ulrich Derenthal for his generous support. Part of this work was carried out at the Institut Henri Poincaré during the trimester ``À la redécouverte des points rationnels''. We are grateful to the organizers and the staffs for creating very stimulating atmosphere. We would like to express our heartfelt thanks to the anonymous referees for their careful scrutiny and valuable suggestions. The first author is supported by a Humboldt-Forschungsstipendiaten. The second author is supported by grant DE 1646/4-2 of the Deutsche Forschungsgemeinschaft.

\bibliographystyle{alpha}

\begin{thebibliography}{Gro}
\bibitem{AIM14} Amer. Inst. Math.: \emph{AIM OPEN PROBLEM SESSION} 2014, available at \url{http://aimath.org/pastworkshops/ratlhigherdimvarproblems.pdf}.

\bibitem{Ana} S. Anantharaman: \emph{Schémas en groupes, espaces homogènes et espaces algébriques sur une base de dimension 1}, Mémoires de la S. M. F., tome 33 (1973), 5-79.

\bibitem{Bor} M. Borovoi: \emph{On Representations of Integers by Indefinite Ternary Quadratic Forms}, J. Number Theory, Volume 90, Issue 2, 2001, 281-293.

\bibitem{Bo} A. Borel: \emph{Linear algebraic groups}. Second edition. Graduate Texts in Mathematics, 126. Springer-Verlag, New York, 1991. xii+288 pp.
\bibitem{BLR} S. Bosch, W. Lütkebohmert, M. Raynaud: \emph{Néron models}, Ergebnisse der Mathematik und ihrer Grenzgebiete (3), 21. Springer-Verlag, Berlin, 1990. x+325 pp. 



\bibitem{Conrad} B. Conrad: \emph{Weil and Grothendieck approaches to adelic points}. Enseign. Math. 58 (2012), 61-97.
\bibitem{CGP} B. Conrad, O. Gabber, G. Prasad: \emph{Pseudo-reductive groups}. Second edition. New Mathematical Monographs, 17. Cambridge University Press, Cambridge, 2010. xx+665 pp.  

\bibitem{CLX} Y. Cao, Y. Liang, F. Xu: \emph{Arithmetic purity of strong approximation for homogeneous spaces}. J. Math. Pures Appl. 132 (2019), 334-368.
\bibitem{CX} Y. Cao, F. Xu: \emph{Strong approximation with Brauer-Manin obstruction for toric varieties}. Ann. Inst. Fourier (Grenoble) 68 (2018), no. 5, 1879--1908.
\bibitem{CTX09} J-L. Colliot-Thélène, F. Xu: \emph{Brauer-Manin obstruction for integral points of homogeneous spaces and representation of integral quadratic forms}. Compos. Math. 145 (2009), 309–363.
\bibitem{CTX13} J-L. Colliot-Thélène, F. Xu: \emph{Strong approximation for the total space of certain quadric fibrations}. Acta Arith. 157 (2013), no. 2, 169–199.
\bibitem{DRS} W. Duke, Z. Rudnick, P. Sarnak: \emph{Density of integer points on affine homogeneous varieties}. Duke Math. J. 71 (1993), no. 1, 143–179.

\bibitem{Eic38} M. Eichler: \emph{Allgemeine Kongruenzklassenteilungen der Ideal einfacher Algebren \"uber algebraischen Zahlk\"orpern und ihre L-Reihen}, J. reine und angew. Math. 179 (1938), 227-251. 



\bibitem{Friedlander-Iwaniec}  J. Friedlander, H. Iwaniec: \emph{Opera de cribro}. American Mathematical Society Colloquium Publications, 57. American Mathematical Society, Providence, RI, 2010. xx+527 pp.


\bibitem{Gorodnik-Weiss} A. Gorodnik, B. Weiss: \emph{Distribution of lattice orbits on homogeneous varieties}. Geom. Funct. Anal. 17 (2007), no. 1, 58–115.

\bibitem{Grothendieck} A. Grothendieck: \emph{Le groupe de Brauer. III. Exemples et compléments}, Dix Exposés sur la Cohomologie des Schémas, North-Holland, Amsterdam, 1968, pp. 88–188.

\bibitem{Harari} D. Harari: \emph{Le défaut d’approximation forte pour les groupes algébriques commutatifs}, Algebra \& Number Theory 2 (2008), no. 5, 595–611.

\bibitem{Har} R. Hartshorne: \emph{Algebraic geometry}. Graduate Texts in Mathematics, No. 52. Springer-Verlag, New York-Heidelberg, 1977. xvi+496 pp. 

\bibitem{HR} H. Halberstam, H. E. Richert: \emph{Sieve Methods}. London Mathematical Society Monographs, 4. Academic Press, London–New York, 1974.

\bibitem{Hum} J. Humphreys: \emph{Linear algebraic groups}. Graduate Texts in Mathematics, No. 21. Springer-Verlag, New York-Heidelberg, 1975. xiv+247 pp.

\bibitem{Kne65} M. Kneser: \emph{Starke Approximation in algebraischen Gruppen I}, J. reine und angew. Math. {\bf 218} (1965), 190-203. 


	
\bibitem{Maucourant} F. Maucourant: \emph{Homogeneous asymptotic limits of Haar measures of semisimple linear groups and their lattices}. Duke Math. J. 136 (2007), no. 2, 357–399.

\bibitem{Minchev} Kh. P. Min\v{c}hev: \emph{Strong approximation for varieties over an algebraic number field}, Dokl. Akad. Nauk BSSR {\bf 33} (1989), 5-8. 

\bibitem{NS} A. Nevo, P. Sarnak: \emph{Prime and almost prime integral points on principal homogeneous spaces}. Acta Math. 205 (2010), no. 2, 361-402. 


\bibitem{Plat2} V. P. Platonov: \emph{The strong approximation problem and the Kneser-Tits conjecture for algebraic groups}, Izv. Akad. Nauk SSSR, Ser.Mat. {\bf 33} (1969), 1211-1219.


\bibitem{PR} V. Platonov, A. Rapinchuk: \emph{Algebraic groups and number theory}. Pure and Applied Mathematics, 139. Academic Press, Inc., Boston, MA, 1994. xii+614 pp.

\bibitem{Poonen} B. Poonen: \emph{Rational points on varieties}. Graduate Studies in Mathematics, 186. American Mathematical Society, Providence, RI, 2017. xv+337 pp. 

\bibitem{PV} V. L. Popov, E. B. Vinberg: \emph{Invariant Theory}. In: Parshin A.N., Shafarevich I.R. (eds) Algebraic Geometry IV. Encyclopaedia of Mathematical Sciences, vol 55. Springer, Berlin, Heidelberg (1994).



\bibitem{Serre} J.-P. Serre: \emph{Topics in Galois theory}. Lecture notes prepared by Henri Damon. With a foreword by Darmon and the author. Research Notes in Mathematics, 1. Jones and Bartlett Publishers, Boston, MA, 1992. xvi+117 pp. 

\bibitem{Sza} T. Szamuely: \emph{Galois groups and fundamental groups}. Cambridge Studies in Advanced Mathematics, 117. Cambridge University Press, Cambridge, 2009. x+270 pp.

\bibitem{Tenenbaum} G. Tenenbaum: \emph{Introduction to analytic and probabilistic number theory}. Cambridge Studies in Advanced Mathematics, 46. Cambridge University Press, Cambridge, 1995. xvi+448 pp.

\bibitem{Wei14} D. Wei: \emph{Strong approximation for a toric variety},  Acta Math. Sinica, to appear. arXiv:1403.1035 (2014). 
\bibitem{Wit16} O. Wittenberg: \emph{Rational points and zero-cycles on rationally connected varieties over number fields}, in Algebraic Geometry: Salt Lake City 2015, Part 2, p. 597–635, Proceedings of Symposia in Pure Mathematics 97, American Mathematical Society, Providence, RI, 2018.



\end{thebibliography}
\end{document}